\theoremstyle{plain}
\newtheorem{theorem}{Theorem}[section]
\newtheorem{corollary}[theorem]{Corollary}
\newtheorem{definition}[theorem]{Definition}
\begin{document}

\title{Signed-Bit Representations of Real Numbers}

%  First author
%
\author{Robert S. Lubarsky\\Fred Richman\\
Florida Atlantic University\\rlubarsk@fau.edu\\richman@fau.edu}
\maketitle

\begin{abstract}
The signed-bit representation of real numbers is like the binary
representation, but in addition to 0 and 1 you can also use $-1$.
It lends itself especially well to the constructive
(intuitionistic) theory of the real numbers. The first part of the
paper develops and studies the signed-bit equivalents of three
common notions of a real number: Dedekind cuts, Cauchy sequences,
and regular sequences. This theory is then applied to
homomorphisms of Riesz spaces into $\mathbb{R}$.
\end{abstract}
%\maketitle

\section{Introduction}

In \cite{CS}, Coquand and Spitters studied the Stone-Yosida
representation theorem for lattice ordered vector spaces (Riesz
spaces). They gave a constructive proof of this theorem for
separable, seminormed Riesz spaces which used Dependent Choice
(DC). They then asked whether DC is necessary and suggested a
construction which would show that it was. This question was
answered in \cite{LR} and \cite{LR'} along the lines they
suggested.

In thinking about this question, we were led to representing
real numbers in a tree-like structure. This representation
is a lot like the classical signed-bit representation, a
modification of the binary representation where $-1$ is allowed as
well as 0 and 1.
The signed-bit representation is especially suitable to constructivism
and computability because you can show constructively (with DC) that
every real number has a signed-bit representation, but not that every
real number has a binary representation.

The thrust of this paper (Section 2) is this signed-bit
representation. In Sections 3 and 4, the representation is applied to various
questions about real numbers and about homomorphisms
of Riesz spaces into $\mathbb{R}$. The benefits of these
applications include a reformulation of the choice principles
involved, a generalization from countable and separable Riesz
space to ones of arbitrary size, and a recasting of the issues in
a form more familiar to classical set theorists.

%-------------------------------------------------------------------------
\section{Signed-bit representations of real numbers}

%-------------------------------------------------------------------------
\subsection{Three kinds of real numbers}

We are interested in studying real numbers from a constructive
point of view without using countable choice principles. We
consider three kinds of real numbers: Dedekind, regular, and
Cauchy (see also \cite{FH} and \cite{L}). The latter two kinds are
given by sequences of rational numbers (see below). A \emph{real
number}, simpliciter, is a Dedekind real number, that is, a real
number is determined by a located Dedekind cut \cite[Problem
2.6]{B}, \cite[p. 170]{TvD}. A \emph{located Dedekind cut} can be
defined as a nonempty proper open subset $L$ of the rational
numbers $\mathbb{Q}$ such that for all pairs of rational numbers
$u<$ $v$, either $u\in L$ or $v\notin L$. If $r$ is the real
number defined by $L$, then $L=\left\{  u\in\mathbb{Q}:u<r\right\}
$. The Dedekind real numbers are exactly the things that can be
approximated coherently by rational numbers.

If $r$ is any real number, then for each positive integer $n$ there is a
rational number $u$ such that $\left\vert u-r\right\vert \leq1/n$. Using
countable choice, we could construct a sequence $q$ of rational numbers so
that $\left\vert q_{n}-r\right\vert \leq1/n$. Such a sequence $q$ is a
\emph{regular sequence} in the sense that%
\[
\left\vert q_{m}-q_{n}\right\vert \leq\frac{1}{m}+\frac{1}{n}
\]
for all $m$ and $n$. Note that a regular sequence is a Cauchy
sequence, and we leave it as an exercise to show that every Cauchy
sequence converges to some real number.
Conversely, if a regular sequence $q $ converges to the real
number $r$, then $\left\vert q_{n}-r\right\vert \leq1/n$ for all
$n$. Bishop \cite{B} \emph{defines} a real number to be a regular
sequence of rational numbers.

\begin{theorem}
\label{modulus}Let $q$ be a sequence of rational numbers and $\mu$ a sequence
of positive integers. Then the following two conditions are equivalent

\begin{enumerate}
\item For all $i,j$, if $m\geq\mu_{i}$ and $n\geq\mu_{j}$, then%
\[
\left\vert q_{m}-q_{n}\right\vert \leq\frac{1}{i}+\frac{1}{j}.
\]

\item There is a real number $r$ so that for all $i$, if $m\geq\mu_{i}$,
then
$$
  \left\vert q_{m}-r\right\vert \leq1/i.
$$
\end{enumerate}
\end{theorem}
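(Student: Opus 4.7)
My plan is to dispatch $(2)\Rightarrow(1)$ by a one-line triangle-inequality calculation --- from $|q_m-r|\leq 1/i$ and $|q_n-r|\leq 1/j$ one immediately reads off $|q_m-q_n|\leq 1/i+1/j$ --- and to devote the rest of the argument to $(1)\Rightarrow(2)$, which is the direction that actually has to produce a real number from $(q,\mu)$.

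For $(1)\Rightarrow(2)$ my strategy is to reduce to the regular-sequence case already discussed in the paragraph preceding the theorem. I will form the subsequence $p_i := q_{\mu_i}$, which is defined explicitly from the given data and so requires no choice to produce. Applying hypothesis (1) with $m=\mu_i$ and $n=\mu_j$ yields $|p_i-p_j|\leq 1/i+1/j$, so $p$ is a regular sequence. The exercise just stated before the theorem then hands me a real number $r$ that is the limit of $p$, and the remark immediately following it gives $|p_i-r|\leq 1/i$ for every $i$.

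To finish, I will check that this $r$ witnesses condition (2). Fixing $i$ and any $m\geq\mu_i$, the triangle inequality combined with hypothesis (1) (used with $n=\mu_j$) gives, for every $j$,
$$
|q_m-r| \;\leq\; |q_m-p_j|+|p_j-r| \;\leq\; (1/i+1/j)+1/j \;=\; 1/i+2/j.
$$
Since this bound holds for all $j$, the Archimedean property of the reals forces $|q_m-r|\leq 1/i$, as required.

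The only genuinely delicate step is producing $r$ without countable choice; what makes it safe here is that $p$ is an explicit function of $q$ and $\mu$ rather than the result of picking witnesses one term at a time, so the existence of a limit for a regular sequence is invoked only once and on a concrete sequence. If the appeal to the quoted ``exercise'' is deemed too informal to cite, I would instead build $r$ directly as the located Dedekind cut $L := \{u\in\mathbb{Q} : \exists n,\ u+1/n < p_n\}$, with locatedness proved by a short case analysis --- on input $u<v$, choose $n$ with $3/n < v-u$ and decide the rational inequality $p_n > u + 2/n$ --- after which the rest of the argument goes through unchanged.
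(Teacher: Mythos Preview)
Your proof is correct and follows essentially the same route as the paper: for $(2)\Rightarrow(1)$ both use the triangle inequality, and for $(1)\Rightarrow(2)$ both produce the limit $r$ from the given data and then pass to the limit in the inequality $|q_m-q_n|\leq 1/i+1/j$ as $j\to\infty$. The only cosmetic difference is that the paper observes directly that condition~(1) makes $q$ itself a Cauchy sequence and invokes the exercise on Cauchy convergence, whereas you take the small detour of first extracting the regular subsequence $p_i=q_{\mu_i}$; your care about avoiding choice is well placed but unnecessary here, since $q$ is already given explicitly.
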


\begin{proof}
If 1 holds, then $q$ is a Cauchy sequence, hence converges to a real number
$r$. If $m\geq\mu_{i}$, then%
\[
\left\vert q_{m}-q_{n}\right\vert \leq\frac{1}{i}+\frac{1}{j}
\]
whenever $n\geq\mu_{j}$. In particular, this inequality holds for arbitrarily
large values of $n$ and $j$, so $\left\vert q_{m}-r\right\vert \leq1/i$.
Conversely, suppose 2 holds. Then%
\[
\left\vert q_{m}-q_{n}\right\vert \leq\left\vert q_{m}-r\right\vert
+\left\vert r-q_{n}\right\vert \leq\frac{1}{i}+\frac{1}{j}
\]
for all $m\geq\mu_{i}$ and $n\geq\mu_{j}$. \medskip
\end{proof}

\noindent We say that $\mu$ is a \emph{modulus of convergence} for $q$ if
either of the equivalent conditions in Theorem \ref{modulus} hold.

If $q$ is a regular sequence, then it has the modulus of
convergence $\mu _{m}=m$. Conversely, if $\mu$ is a modulus of
convergence for $q$, then the sequence $q_{\mu_{m}}$ is a regular
sequence converging to the limit $r$ of $q$. So a real number $r$
is the limit of a regular sequence of rational numbers if and only
if it is the limit of a sequence of rational numbers that has a
modulus of convergence. We call such a real number a \emph{regular
real number}. Troelstra and van Dalen \cite{TvD} define a Cauchy
real number to be what we are calling here a regular real number.

\begin{theorem}
If $r$ is a regular real number, then every sequence of rational numbers
converging to $r$ has a modulus of convergence.
\end{theorem}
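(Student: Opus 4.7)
The plan is to use a regular sequence $p = (p_n)$ of rationals witnessing that $r$ is regular, with $|p_n - r| \leq 1/n$, to build a modulus for $q$ explicitly. The key reduction is from the real-valued condition $|q_m - r| \leq 1/i$ to a decidable rational one: by the triangle inequality, $|q_m - p_{2i}| \leq 1/(2i)$ implies $|q_m - r| \leq 1/i$, and the rational comparison is decidable. Hence it suffices, for each $i$, to construct $\nu_i$ with $|q_m - p_{2i}| \leq 1/(2i)$ for all $m \geq \nu_i$; it will be more convenient to work with the slightly looser target $|q_m - p_{3i}| \leq 2/(3i)$, since that bound is actually attained (rather than only approached) once $|q_m - r| \leq 1/(3i)$. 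Propositionally such a $\nu_i$ exists, because $q \to r$.

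To extract $\nu_i$ as an explicit function of $i$ without countable choice, I would minimize over natural numbers using a decidable certificate built from $q$ and $p$. A natural candidate is
\[
\nu_i = \min\bigl\{\, n : |q_k - p_{3i}| \leq 2/(3i) \text{ for all } k \in [n, f(n)]\,\bigr\}
\]
for a suitable finite lookahead $f$. Once a workable $\nu_i$ is in hand, Theorem~\ref{modulus} at once certifies that $\nu$ is a modulus of convergence for $q$, with limit $r$ by uniqueness.

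The main obstacle is that a finite-window certificate pins down only boundedly many terms of $q$, while the modulus condition requires the bound on the entire tail $m \geq \nu_i$. Bridging this gap without countable choice is the heart of the theorem. I expect the resolution to combine the explicit rates $|p_k - r| \leq 1/k$ with the Cauchy behaviour of $q$ implied by $q \to r$, through a careful choice of $f$ and of the comparison index within $p$, so that the bound verified on the finite window propagates to all later $m$. This is where the regularity of $r$ must do genuine work, since an analogous statement for general Dedekind reals would require countable choice.
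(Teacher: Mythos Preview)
Your reduction to a decidable rational inequality is exactly what the paper does (with the roles of the two sequences swapped and the threshold $1/(2i)$ in place of your $2/(3i)$, both with comparison point $p_{3i}$). The gap is the extraction step. A lookahead $f$ depending only on $n$ and the given data cannot work: since you have no modulus for $q$, nothing prevents $q$ from satisfying $|q_k - p_{3i}|\le 2/(3i)$ on the window $[0,f(0)]$ and then violating it at $f(0)+1$ before eventually returning to $r$. Your minimum would then be $0$ while the tail condition fails. No clever choice of $f$ or of the comparison index in $p$ repairs this, because the formula for $\nu_i$ commits to the \emph{first} $n$ whose window checks out, regardless of what happens afterwards.

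The paper's resolution is the idea you were circling. From $q\to r$ there \emph{exists} some $k$ with $|q_m-r|\le 1/(3i)$, hence $|q_m-p_{3i}|\le 2/(3i)$, for all $m\ge k$. Fix any such witness $k$ and set
\[
\nu_i \;=\; \min\bigl\{\, n\le k : |q_m-p_{3i}|\le 2/(3i)\ \text{for all}\ m\ \text{with}\ n\le m\le k \,\bigr\},
\]
a finite decidable search. Because the inequality holds automatically for all $m\ge k$, this $\nu_i$ is in fact the least $n$ for which the inequality holds on the whole tail $m\ge n$; in particular it is independent of which witness $k$ was used. Thus $i\mapsto\nu_i$ is defined by unique description, and no countable choice is invoked. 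The regularity of $r$ enters exactly where you said it should: the fixed rational anchor $p_{3i}$ makes the predicate decidable so that ``least $n$'' is meaningful. Then $|q_m-r|\le |q_m-p_{3i}|+|p_{3i}-r|\le 2/(3i)+1/(3i)=1/i$ for $m\ge\nu_i$, and Theorem~\ref{modulus} finishes as you indicated.
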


\begin{proof}
Let $q$ be a regular sequence of rational numbers converging to
$r$. Let $p$ be a sequence of rational numbers converging to $r$.
We need to find a modulus of convergence $\mu$ for the sequence
$p$.

Given $m$ we define $\mu_{m}$ as follows. Choose $k$ so that
$\left\vert p_{n}-r\right\vert \leq1/6m$ for all $n\geq k$. So, if
$n\geq k$, we have
\[
\left\vert p_{n}-q_{3m}\right\vert \leq\left\vert
p_{n}-r\right\vert +\left\vert r-q_{3m}\right\vert
\leq\frac{1}{6m}+\frac{1}{3m}\leq\frac{1}{2m}
\]
Let $\mu_{m}\leq k$ be the smallest integer such that%
\[
\left\vert p_{n}-q_{3m}\right\vert \leq\frac{1}{2m}
\]
for $n=\mu_{m},\ldots,k$. Then $\mu_{m}$ is the smallest integer
for which the above inequality holds for all $n\geq\mu_{m}$, so
$\mu_{m}$ does not depend on the choice of $k$.

It remains to show that $\left\vert p_{n}-r\right\vert \leq1/m$
for all $n\geq\mu_{m}$. But, if $n\geq\mu_{m}$, then
\[
\left\vert p_{n}-r\right\vert \leq\left\vert
p_{n}-q_{3m}\right\vert +\left\vert q_{3m}-r\right\vert
\leq\frac{1}{2m}+\frac{1}{3m}\leq\frac{1}{m}
\]
\medskip
\end{proof}

\noindent In particular, every sequence of rational numbers that
converges to a rational number has a modulus of convergence.
Irrational numbers are also regular real numbers---in fact, they
have decimal expansions. By an \emph{irrational number} we mean a
real number $r$ such that $\vert r-q\vert > 0$ for each rational
number $q$. It follows that algebraic real numbers, because they
are either rational or irrational, are regular real numbers.

In the absence of countable choice, not every real number can be written as
the limit of a sequence of rational numbers, regular or otherwise. A real
number $r$ that can be so written is called a \emph{Cauchy real number}
because it is the limit of a Cauchy sequence of rational numbers. Not every
Cauchy real number is a regular real number (see \cite{L}).

%-------------------------------------------------------------------------
\subsection{The pseudotree}

We want to consider the following infinite tree-like structure
$T$, the {\em ternary pseudotree}:%

%BeginExpansion
\begin{picture}(400,100)(10,-5)
\put(40,0){\circle*{2}} \put(120,0){\circle*{2}}
\put(200,0){\circle*{2}} \put(280,0){\circle*{2}}
%\put(360,0){\circle*{2}}
\put(0,40){\circle*{2}}
\put(40,40){\circle*{2}} \put(80,40){\circle*{2}}
\put(120,40){\circle*{2}} \put(160,40){\circle*{2}}
\put(200,40){\circle*{2}} \put(240,40){\circle*{2}}
\put(280,40){\circle*{2}} \put(320,40){\circle*{2}}
%\put(360,40){\circle*{2}}
\put(0,60){\circle*{2}}
\put(20,60){\circle*{2}} \put(40,60){\circle*{2}}
\put(60,60){\circle*{2}} \put(80,60){\circle*{2}}
\put(100,60){\circle*{2}} \put(120,60){\circle*{2}}
\put(140,60){\circle*{2}} \put(160,60){\circle*{2}}
\put(180,60){\circle*{2}} \put(200,60){\circle*{2}}
\put(220,60){\circle*{2}} \put(240,60){\circle*{2}}
\put(260,60){\circle*{2}} \put(280,60){\circle*{2}}
\put(300,60){\circle*{2}} \put(320,60){\circle*{2}}
\put(340,60){\circle*{2}}
%\put(360,60){\circle*{2}}
\put(0,70){\circle*{2}} \put(10,70){\circle*{2}}
\put(20,70){\circle*{2}} \put(30,70){\circle*{2}}
\put(40,70){\circle*{2}} \put(50,70){\circle*{2}}
\put(60,70){\circle*{2}} \put(70,70){\circle*{2}}
\put(80,70){\circle*{2}} \put(90,70){\circle*{2}}
\put(100,70){\circle*{2}} \put(110,70){\circle*{2}}
\put(120,70){\circle*{2}} \put(130,70){\circle*{2}}
\put(140,70){\circle*{2}} \put(150,70){\circle*{2}}
\put(160,70){\circle*{2}} \put(170,70){\circle*{2}}
\put(180,70){\circle*{2}} \put(190,70){\circle*{2}}
\put(200,70){\circle*{2}} \put(210,70){\circle*{2}}
\put(220,70){\circle*{2}} \put(230,70){\circle*{2}}
\put(240,70){\circle*{2}} \put(250,70){\circle*{2}}
\put(260,70){\circle*{2}} \put(270,70){\circle*{2}}
\put(280,70){\circle*{2}} \put(290,70){\circle*{2}}
\put(300,70){\circle*{2}} \put(310,70){\circle*{2}}
\put(320,70){\circle*{2}} \put(330,70){\circle*{2}}
\put(340,70){\circle*{2}}
%\put(350,70){\circle*{2}}
%\put(360,70){\circle*{2}}
\put(40,0){\line(0,1){40}}
\put(120,0){\line(0,1){40}} \put(200,0){\line(0,1){40}}
\put(280,0){\line(0,1){40}}
%\put(360,0){\line(0,1){40}}
\put(40,0){\line(-1,1){40}} \put(120,0){\line(-1,1){40}}
\put(200,0){\line(-1,1){40}} \put(280,0){\line(-1,1){40}}
%\put(360,0){\line(-1,1){40}}
\put(40,0){\line(1,1){40}}
\put(120,0){\line(1,1){40}} \put(200,0){\line(1,1){40}}
\put(280,0){\line(1,1){40}} \put(0,40){\line(1,1){20}}
\put(40,40){\line(1,1){20}} \put(80,40){\line(1,1){20}}
\put(120,40){\line(1,1){20}} \put(160,40){\line(1,1){20}}
\put(200,40){\line(1,1){20}} \put(240,40){\line(1,1){20}}
\put(280,40){\line(1,1){20}} \put(320,40){\line(1,1){20}}
\put(0,40){\line(0,1){20}} \put(40,40){\line(0,1){20}}
\put(80,40){\line(0,1){20}} \put(120,40){\line(0,1){20}}
\put(160,40){\line(0,1){20}} \put(200,40){\line(0,1){20}}
\put(240,40){\line(0,1){20}} \put(280,40){\line(0,1){20}}
\put(320,40){\line(0,1){20}}
%\put(360,40){\line(0,1){20}}
\put(40,40){\line(-1,1){20}} \put(80,40){\line(-1,1){20}}
\put(120,40){\line(-1,1){20}} \put(160,40){\line(-1,1){20}}
\put(200,40){\line(-1,1){20}} \put(240,40){\line(-1,1){20}}
\put(280,40){\line(-1,1){20}} \put(320,40){\line(-1,1){20}}
%\put(360,40){\line(-1,1){20}}
\put(0,60){\line(0,1){10}}
\put(20,60){\line(0,1){10}} \put(40,60){\line(0,1){10}}
\put(60,60){\line(0,1){10}} \put(80,60){\line(0,1){10}}
\put(100,60){\line(0,1){10}} \put(120,60){\line(0,1){10}}
\put(140,60){\line(0,1){10}} \put(160,60){\line(0,1){10}}
\put(180,60){\line(0,1){10}} \put(200,60){\line(0,1){10}}
\put(220,60){\line(0,1){10}} \put(240,60){\line(0,1){10}}
\put(260,60){\line(0,1){10}} \put(280,60){\line(0,1){10}}
\put(300,60){\line(0,1){10}} \put(320,60){\line(0,1){10}}
\put(340,60){\line(0,1){10}}
%\put(360,60){\line(0,1){10}}
\put(0,60){\line(1,1){10}} \put(20,60){\line(1,1){10}}
\put(40,60){\line(1,1){10}} \put(60,60){\line(1,1){10}}
\put(80,60){\line(1,1){10}} \put(100,60){\line(1,1){10}}
\put(120,60){\line(1,1){10}} \put(140,60){\line(1,1){10}}
\put(160,60){\line(1,1){10}} \put(180,60){\line(1,1){10}}
\put(200,60){\line(1,1){10}} \put(220,60){\line(1,1){10}}
\put(240,60){\line(1,1){10}} \put(260,60){\line(1,1){10}}
\put(280,60){\line(1,1){10}} \put(300,60){\line(1,1){10}}
\put(320,60){\line(1,1){10}}
%\put(340,60){\line(1,1){10}}
\put(20,60){\line(-1,1){10}} \put(40,60){\line(-1,1){10}}
\put(60,60){\line(-1,1){10}} \put(80,60){\line(-1,1){10}}
\put(100,60){\line(-1,1){10}} \put(120,60){\line(-1,1){10}}
\put(140,60){\line(-1,1){10}} \put(160,60){\line(-1,1){10}}
\put(180,60){\line(-1,1){10}} \put(200,60){\line(-1,1){10}}
\put(220,60){\line(-1,1){10}} \put(240,60){\line(-1,1){10}}
\put(260,60){\line(-1,1){10}} \put(280,60){\line(-1,1){10}}
\put(300,60){\line(-1,1){10}} \put(320,60){\line(-1,1){10}}
\put(340,60){\line(-1,1){10}}
%\put(360,60){\line(-1,1){10}}
\end{picture}%
%EndExpansion

\noindent The structure continues infinitely far in all directions
(left, right, up, and down). The nodes are dyadic intervals
$(k/2^{n},(k+2)/2^n)$ where
$k$ and $n$ are integers. The descendants of a node are its subintervals.
For example, the bottom four nodes in the figure could be the intervals
$(-1,0)$, $(-1/2,1/2)$, $(0,1)$, and $(1/2,3/2)$.
The children (immediate descendants) of the node $(0,1)$ are
$(0,1/2)$, $(1/4,3/4)$, and $(1/2,1)$.

The level of a node corresponds inversely to its radius. For
instance, $(0,1)$ is on level $1$ because it has a radius of
$2^{-1}$. In general, the nodes on level $l$ are those with
radius $2^{-l}$, and (hence) length $2^{1-l}$.

A path through $T$ corresponds exactly to a signed-bit
representation of a real number.\footnote {Apparently the first
use of the ternary pseudotree for the signed-bit representation is
in \cite{ABH}. There $T$ is called the {\em Stern-Brocot} or {\em
Farey tree}, even though we find enough difference between each of
those trees and $T$ to warrant the use of a different name. For
more on signed-bit representations themselves, see \cite{W}.} Just
as a number written in binary is a sequence of 0s and 1s, indexed by
$\mathbb{Z}$, in which all entries below some index $n$ are 0, a
signed-bit number, also known as a signed-binary or
signed-digit number, is such a $\mathbb{Z}$-indexed sequence of
0s, 1s, and $-1$s. The sequence $a$ represents the number
$\sum_i a_i 2^{-i}$. No number has a unique representation. The
corresponding path in $T$ starts at the node of length 2$^{n+2}$
with midpoint 0. At stage $i$ the path goes left, middle, or right,
depending on whether $a_i$ is $-1$, 0, or 1 respectively.
Actually, the only paths generated in this way are those that start at
some node with midpoint 0. Those with no such start, or no start
at all, would not correspond to a signed-bit representation in the
sense described here.

If $I$ is a node, we denote the three children of $I$ by $\lambda
I$, $\mu I$, and $\rho I$ (left, middle, and right). An
\emph{extreme descendant} of $I$ is a node of the form
$\lambda^{i}I$ or $\rho^{i}I$ for some $i$.

%-------------------------------------------------------------------------
\subsection{Ideals in $T$ and their real numbers}

Given a real number $r$, let $O_r$ be $\{ I \in T \mid r \in I
\}$, the set of nodes in $T$ that contain $r$. Note that $O_r$ is
closed downwards (under superset) and closed under join
(intersection). An \emph{o-ideal} is a nonempty set $O$ of nodes
closed downwards and under join, such that every node in $O$ has a
nonextreme descendant in $O$.

\begin{theorem} The function $r \mapsto O_r$ is a bijection from
the real numbers to the o-ideals. \end{theorem}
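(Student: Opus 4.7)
The plan is to verify that each $O_r$ is an o-ideal (so the map is well-defined), that it is injective, and that every o-ideal arises as $O_r$ for a real $r$ recovered via a located Dedekind cut.

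First I would check that $O_r$ is an o-ideal. Nonemptiness, superset-closure, and closure under intersections of overlapping nodes are immediate. For the nonextreme-descendant condition: since $r \in I = (a, b)$ strictly, apartness gives a rational $\epsilon > 0$ with $a + \epsilon < r$ and $r < b - \epsilon$; choosing $n$ with $(b-a)/2^n < \epsilon$ guarantees that $r$ lies in some descendant of $I$ at level $l_I + n$ whose address is neither pure $\lambda$ nor pure $\rho$, hence nonextreme. For injectivity, if $|r - r'| > 0$ pick a level $l$ with $2^{1-l} < |r - r'|$ and any level-$l$ node $I \ni r$; then $|I| < |r - r'|$ forces $r' \notin I$, so $I \in O_r \setminus O_{r'}$.

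For surjectivity, given an o-ideal $O$ I define
\[
L = \{u \in \mathbb{Q} : \exists I \in O,\ u < \inf I\}, \qquad U = \{v \in \mathbb{Q} : \exists I \in O,\ \sup I < v\},
\]
and show that $(L, U)$ is a located Dedekind cut defining a real $r$ with $O = O_r$. Nonemptiness, openness, and downward/upward closure of $L$ and $U$ are immediate. Disjointness $L \cap U = \emptyset$ uses join closure: $u \in L \cap U$ would yield $I, J \in O$ with $\sup J < u < \inf I$, hence disjoint $I, J$, contradicting that $I \cap J$ must be a nonempty node of $O$. Locatedness (for rationals $u < v$) follows by iterating the nonextreme-descendant condition on any element of $O$---each iteration strictly shrinks the length, since a nonextreme descendant has both endpoints strictly inside its parent---until we obtain $K \in O$ with $|K| < v - u$; decidable comparison of the rational $\inf K$ with $u$ then gives either $u < \inf K$ (so $u \in L$) or $\inf K \leq u$ (so $\sup K < v$, and hence $v \in U$).

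Finally $O = O_r$ reduces to two inclusions. For $J \in O$, any nonextreme descendant $K \in O$ of $J$ has $\inf J < \inf K$ and $\sup K < \sup J$, witnessing $\inf J \in L$ and $\sup J \in U$, so $r \in J$; hence $O \subseteq O_r$. Conversely, if $I = (a, b) \in O_r$ then $a \in L$ and $b \in U$ produce witnesses $J, J' \in O$ with $a < \inf J$ and $\sup J' < b$; then $J \cap J' \in O$ by join closure and $J \cap J' \subseteq (\inf J, \sup J') \subseteq I$, so superset-closure of $O$ gives $I \in O$. The main obstacle is the disjointness of $L$ and $U$: it asserts that any two elements of $O$ must overlap, and relies on reading ``closed under join (intersection)'' as forcing the intersection of any two members of $O$ to be a nonempty node in $O$.
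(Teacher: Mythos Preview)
Your proof is correct and follows essentially the paper's strategy: verify that $O_r$ is an o-ideal, then invert by recovering a real number from an arbitrary o-ideal $O$. The paper is terser about the inverse, simply asserting that a directed family of open intervals containing arbitrarily small members determines a unique real $r$ lying in all their closures (and then, via the nonextreme-descendant condition, in the open intervals themselves), whereas you carry this out explicitly as a located Dedekind cut; your separate apartness-based injectivity argument is redundant once the inverse is built, but harmless.

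Regarding your flagged worry: the intended reading of ``closed under join'' is indeed the strong, directed one---any two members of $O$ have a join in $T$ and it lies in $O$---which is the standard meaning of \emph{ideal} in a poset and is confirmed in the paper's own proof by the phrase ``closed under finite intersection.'' Under the weak reading (closure only under joins that happen to exist), a set such as $O_0 \cup O_1$ would qualify as an o-ideal and the theorem would fail, so your reading is the correct one and your disjointness argument for $L$ and $U$ goes through.
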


\begin{proof}
To prove that $O_r$ is an o-ideal, we must show that each node of
$O_{r}$ has a nonextreme descendant in $O_{r}$. Suppose
$((k-1)/2^{n}, (k+1)/2^n)\in O_{r}$, that is
\[
\frac{k-1}{2^{n}}<r<\frac{k+1}{2^{n}}
\]
Then there exists $k^{\prime}$ and $n^{\prime}$ such that%
\[
\frac{k-1}{2^{n}}<\frac{k^{\prime}-1}{2^{n^{\prime}}}<r<\frac{k^{\prime}%
+1}{2^{n^{\prime}}}<\frac{k+1}{2^{n}}
\]
But this makes $((k^{\prime}-1)/2^{n^{\prime}},
(k^{\prime}+1)/2^{n^{\prime}})$ a nonextreme descendant of
$((k-1)/2^{n}, (k+1)/2^n)$ in $O_{r}$. Indeed, for
$k^{\prime}/2^{n^{\prime}}$ to be the midpoint of an extreme
descendant, it must be of the form%
\[
\left(  2^{i}(k-1)+1\right)  /2^{n+i}\text{ or }\left(  2^{i}(k+1)-1\right)
/2^{n+i}
\]
so $k^{\prime}=2^{n^{\prime}-n}\left(  k\mp1\right)  \pm1$. But%
\[
2^{n^{\prime}-n}\left(  k+1\right)  -1>k^{\prime}>2^{n^{\prime}-n}\left(
k-1\right)  +1
\]

To see that the function is a bijection, let $O$ be an o-ideal.
Then $O$ defines a set of nonempty open intervals closed under
finite intersection and containing arbitrarily small intervals. So
there is a unique real number $r$ that is contained in all the
closures of intervals in $O$. But because each open interval $J$
in $O$ has a nonextreme descendant, the number $r$ is contained in
$J$ itself. To see that $O=O_{r}$, suppose some dyadic open
interval $J$ contains $r$. Then every sufficiently small dyadic
interval that contains $r$ is contained in $J$. As $O$ is a downset,
$J$ must be in $O$. \medskip
\end{proof}

We can also consider the closed interval correlates. For a real number
$r$, let $C_r$ be $\{ I \in T \mid r \in \overline{I} \}$, where
$\overline{I}$ is the (topological) closure of $I$. The subset $C_r$ is
not closed under join, but it does satisfy the following closure conditions:
\begin{enumerate}
\item Each node in $C_r$ has a child in $C_r$.

\item The nodes at each level in $C_r$ are adjacent, and there are at most
three of them.

\item $\lnot I\notin C_r\Rightarrow I\in C_r$.

\item If $I$ is a node in $C_r$, then $\lambda I\notin C_r\Rightarrow\rho
I\in C_r$, and $\rho I\notin C_r\Rightarrow\lambda I\in C_r$. (By
property 3, these are equivalent.)

\item If $\rho^{i}I\in C_r$ for all $i$, then $I$ is the leftmost member of
three adjacent nodes in the downset, and conversely. Same with
$\rho$ replaced by $\lambda$ and \textquotedblleft
leftmost\textquotedblright\ by \textquotedblleft
rightmost\textquotedblright.

\item If two nodes of $C_r$ have a join in $T$, then that join is in $C_r$.
\end{enumerate}

A \emph{c-ideal} is a nonempty set of nodes satisfying the six
conditions above.

\begin{theorem} The function $r\mapsto C_r$ is a bijection from
the real numbers to the c-ideals. \end{theorem}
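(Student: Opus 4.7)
The plan is to prove the bijection by verifying the two directions separately. First I would check that $C_r$ satisfies all six c-ideal conditions. Conditions 2, 4, and 6 follow from elementary bookkeeping about dyadic intervals and closures under inclusion; condition 3 holds because $r \in \overline{I}$ is a conjunction of two non-strict inequalities (a $\Pi^0_1$ statement), hence automatically double-negation stable; and condition 5 follows because the intervals $\overline{\rho^i I}$ share the right endpoint of $\overline{I}$ while their lengths shrink to zero, so $r \in \bigcap_i \overline{\rho^i I}$ forces $r$ to be that right endpoint, which is then the common boundary of exactly three adjacent dyadic intervals at the level of $I$. The condition requiring more constructive care is condition 1: given $r \in \overline{I}$, I would use cotransitivity of $r$ against the quarter-length boundaries separating the three children of $I$ to exhibit a child whose closure contains $r$.

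For the inverse direction, given a c-ideal $C$, I would construct the associated real number level by level. Fix any $I_0 \in C$ at level $l_0$. By iterated application of condition 1 (using induction on $\mathbb{N}$, not dependent choice), $C$ contains descendants of $I_0$ at every level $l \geq l_0$. For each such $l$, let $S_l = \bigcap \{\overline{I} : I \in C,\ I \text{ at level } l\}$. Condition 2 says the nodes of $C$ at level $l$ form at most three adjacent dyadic intervals, so $S_l$ is a non-empty closed subinterval of $\mathbb{R}$ of diameter at most $2^{1-l}$; condition 1 makes $(S_l)_{l \geq l_0}$ nested decreasing; and by completeness of $\mathbb{R}$, $\bigcap_l S_l$ consists of a single real $r$.

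It remains to show $C = C_r$. The inclusion $C \subseteq C_r$ is immediate, since $r \in S_l \subseteq \overline{I}$ for every $I \in C$ at level $l$. The main obstacle is the reverse inclusion $C_r \subseteq C$. My approach is to apply condition 3 and reduce to deriving a contradiction from the joint assumption $I \in C_r$ and $I \notin C$. Under this assumption, at the level of $I$ the set $C$ is strictly smaller than $C_r$; applying condition 5 to a suitable $K \in C$ (which cannot be the extreme member of three adjacent nodes in $C$) yields a minimal index $i_0$ with $\rho^{i_0}K \notin C$, and then condition 4 applied to $\rho^{i_0-1}K$ forces $\lambda\rho^{i_0-1}K \in C$. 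A direct computation of the endpoints of $\lambda\rho^{i_0-1}K$ shows that $r$ does not lie in its closure, contradicting $C \subseteq C_r$. The delicate point is handling several symmetric cases and carrying out this iterative refutation constructively, without decidable case-analysis on the position of $r$.
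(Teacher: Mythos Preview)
Your verification that $C_r$ is a c-ideal and your level-by-level construction of $r$ from a c-ideal $C$ are both sound and close in spirit to the paper's argument (the paper phrases the construction of $r$ via the finite intersection property of $\{\overline{J}:J\in C\}$, but the content is the same). One small caution: the sets $S_l$ need not be \emph{located} intervals, since you cannot decide how many nodes of $C$ sit at level~$l$; however, this does not matter, because a single witness node at level~$l$ already suffices to verify locatedness of the cut defining~$r$.

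The real problem is your argument for $C_r\subseteq C$. Two issues arise. First, the step ``condition~5 yields a minimal index $i_0$ with $\rho^{i_0}K\notin C$'' is not constructively available: condition~5 used contrapositively gives only $\neg\forall i\,(\rho^iK\in C)$, and condition~3 upgrades this to $\neg\neg\exists i\,(\rho^iK\notin C)$, but stability of membership does not give decidability, so you cannot search for the \emph{least} such~$i$. This could perhaps be absorbed into the ambient proof by contradiction, but it needs to be said. Second, and more seriously, the claimed contradiction ``$r\notin\overline{\lambda\rho^{i_0-1}K}$'' is not justified by anything you have assumed. You know $r\in\overline I$ with $I\notin C$, and $K\in C$ at the same level; nothing pins down where $r$ sits inside $\overline K$, so there is no reason $r$ should avoid the particular subinterval $\overline{\lambda\rho^{i_0-1}K}$. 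For instance, if $K$ is the left neighbour of $I$ and $r$ is their common endpoint, then $r$ is the \emph{right} endpoint of $K$, hence lies in $\overline{\rho^iK}$ for every $i$, and your $i_0$ never materialises; if instead $r$ lies strictly inside $K$, it may perfectly well fall in $\overline{\lambda\rho^{i_0-1}K}$.

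The paper's route avoids descending through iterated children. It works entirely at the level of the offending node~$J$: pick any $I\in C$ at that level, show (using condition~5 in the \emph{forward} direction, via condition~1 and $C\subseteq C_r$) that $I$ must be adjacent to~$J$, then that $I$'s other neighbour is excluded from~$C$, so $I$ is the unique node of $C$ at that level; finally downward closure forces $\lambda I,\rho I\notin C$, while condition~4 forces both in~$C$, the contradiction. The key difference is that the paper applies condition~5 as an implication (``all $\rho^iI\in C$'' $\Rightarrow$ ``three adjacent nodes in $C$''), which is constructively unproblematic, rather than contrapositively as you do.
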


\begin{proof}
We first show that $C_{r}$ is a c-downset. Clearly 1, 2, and 6
hold. Property 3 holds because if $J$ is a closed interval, then
$r\in J$ if and only if $\lnot d\left(  r,J\right)  >0$. To see 4,
note that if $r\in J$, but $r\notin \lambda J$, then $r\in \rho
J$, and vice versa. For 5, note that if $r\in \rho^{i}J$ for all
$i$, then $r$ is the right endpoint of $J$.

Now suppose that $C$ is a c-downset. We first show that the
intersection of the intervals $J \in C$ is equal to $\left\{
r\right\}  $ for some real number $r$. Since from 1 there are
arbitrarily small intervals in $C$, it suffices to check the
finite intersection property. So let $F$ be a finite set of nodes
of $C$. If there is a node $J$ in $C$ above all these nodes, then
$J$ is contained in $I$ for all $I \in F$, so the intersection is
nonempty. Otherwise, by 6, there are two nodes in $F$ with no join
in $T$. By 2 this can only happen if there are three adjacent
nodes in $C$, in which case there is a dyadic rational in all the
intervals corresponding to nodes of $I$.

We want to show that $C=C_{r}$. As $r\in J$ for every $J \in C$,
we have $C \subset C_{r}$. We must show that if $r\in J$, then
$J\in C$. By 3 it suffices to assume $J\notin C$ and derive a
contradiction. There is some node $I$ at the level of $J$ that is
in $C$. So $I\neq J$, by the assumption, and also $r\in I$. If the
node $I$ is not next to $J$, then $r$ is the dyadic rational which
is the common endpoint of $I$ and $J$ . This contradicts 5: all
the children of $I$ in $C$ must lean toward $J$ because they all
contain $r$, so by 5 there are three adjacent nodes in $C$. So $I$
and $J$ are next to each other. Similarly, if $I$'s other neighbor
$K$ were in $C$, then all of $K$'s children must lean toward $J$,
contradicting 5. By the adjacency of the nodes in $C$ (property 2)
$I$ is the only node in $C$ at that level. But that also can't
happen: If $\lambda I \in C$ then $I$'s left neighbor is in $C$ by
downward closure, so $\lambda I \notin C$. Symmetrically, $\rho I
\notin C$. By 4, both $\rho I$ and $\lambda I$ are in $C$, the
final contradiction.

Since every c-downset is of the form $C_r$, the function is onto.
It's one-to-one, because if $r \not = r'$ then $C_r \not =
C_{r'}$. \medskip
\end{proof}

If $r$ is a real number, then the infinite paths in $C_{r}$
correspond exactly to the \emph{signed-bit representations} of
$r$. Of course we may not be able to find any such path in the
absence of choice. With choice, property 1 guarantees that every
node of $C_{r}$ is contained in some infinite path. The midpoints
of the nodes of an infinite path in $C_{r}$ form a sequence which
is exactly what Heyting \cite{H} calls a
\emph{canonical number-generator}, so we see that the latter is
essentially a signed-digit representation.

\begin{theorem} \label {modreal} For each real number $r$, the following
are equivalent:
\begin{enumerate}
\item  O$_r$ is countable \item $O_r$ contains an
infinite path \item $C_r$ contains an infinite path \item r is a
regular real number.
\end{enumerate}
\end{theorem}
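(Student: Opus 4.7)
The plan is to prove the equivalences by the cycle $(1) \Rightarrow (2) \Rightarrow (3) \Rightarrow (4) \Rightarrow (1)$. Two of the links will be essentially routine. For $(2) \Rightarrow (3)$, I note that $O_{r} \subseteq C_{r}$, so any infinite descending path in $O_{r}$ is automatically one in $C_{r}$. For $(3) \Rightarrow (4)$, I take a path $(I_{n})$ in $C_{r}$ with $I_{n}$ at level $n$ and observe that the rational midpoints $m_{n}$ of $I_{n}$ satisfy $|m_{n} - r| \leq 2^{-n}$ because $r \in \overline{I_{n}}$; consequently $\mu_{k} = \lceil \log_{2} k \rceil$ is a modulus of convergence for $(m_{n})$ in the sense of Theorem~\ref{modulus}, exhibiting $r$ as a regular real number.

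For $(4) \Rightarrow (1)$, I plan to exploit regularity to produce a surjection $\N \to O_{r}$ without appealing to choice. Fix rationals $q$ and a modulus $\mu$ so that $|q_{\mu_{m}} - r| \leq 1/m$. For any dyadic rational $a$, the inequality $a < r$ holds iff there exists some $m$ with $q_{\mu_{m}} - 1/m > a$, a decidable rational condition; symmetrically for $r < b$. Running through all triples $(n, k, m) \in \mathbb{Z} \times \mathbb{Z} \times \N$ and emitting the node $((k-1)/2^{n}, (k+1)/2^{n})$ precisely when $q_{\mu_{m}}$ simultaneously witnesses both $(k-1)/2^{n} < r$ and $r < (k+1)/2^{n}$ will then yield the desired enumeration: the triggering condition is decidable, and every node of $O_{r}$ satisfies both strict inequalities and so is emitted for sufficiently large $m$.

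For $(1) \Rightarrow (2)$, given an enumeration $(J_{k})$ of $O_{r}$, I build an infinite descending path by recursion: take $I_{N_{0}} := J_{0}$ at whatever level $N_{0}$ it occupies, and given $I_{n} \in O_{r}$ let $I_{n+1}$ be the first $J_{k}$ in the enumeration that is a child of $I_{n}$. The recursion is well-defined provided some child of $I_{n}$ lies in $O_{r}$, i.e., provided $r$ lies in at least one of $\lambda I_{n}, \mu I_{n}, \rho I_{n}$. Two applications of cotransitivity of the order on $\R$ supply this: first, applied to $(k-1/2)/2^{n} < k/2^{n}$, $r \in I_{n}$ splits into $r > (k-1/2)/2^{n}$ (so $r \in \mu I_{n} \cup \rho I_{n}$) and $r < k/2^{n}$ (so $r \in \lambda I_{n}$), and a second cotransitivity inside the first case separates $r \in \rho I_{n}$ from $r \in \mu I_{n}$.

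The main obstacle is this $(1) \Rightarrow (2)$ step: classically one would just pick any child containing $r$ at each stage, but constructively and without countable choice, the path must be produced as a single recursion rather than a sequence of independent local selections. Cotransitivity guarantees the existence of a suitable child at every stage, and the enumeration then deterministically picks the witness, sidestepping the need for any choice principle.
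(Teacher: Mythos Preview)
Your proof is correct and follows the same cycle $(1)\Rightarrow(2)\Rightarrow(3)\Rightarrow(4)\Rightarrow(1)$ with essentially the same argument at each step as the paper. The only minor differences are that in $(1)\Rightarrow(2)$ you justify the existence of a child of $I_n$ in $O_r$ via cotransitivity, whereas the paper tacitly relies on the already-established fact that $O_r$ is an o-ideal (hence every node has a descendant, so a child, in $O_r$), and the paper also extends the path upward by taking ``first parents'' in the enumeration---an addition you omit but which is not needed for the downstream implications.
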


\begin{proof}
1) implies 2): Starting from any node in $O_r$, taking the first
child and first parent in the counting of $O_r$ produces an
infinite path.

2) implies 3): $O_{r}\subset C_{r}$.

3) implies 4): The midpoints of the intervals of any infinite path
in $C_{r}$ form a regular sequence converging to $r$.

4) implies 1): Let $J$ be some node in $O_r$. Let $J_n$ be a
counting of $J$'s siblings and their descendants  such that each
node occurs infinitely often. Let $c_m$ be a sequence of rational
numbers so that $\left\vert c_{m}-r\right\vert \leq1/m$. At stage
$i$, let $s_i=J_i$ if the closed interval $\left[  c_{i}-1/i,\
c_{i}+1/i\right]  $ is contained in $J_i$, undefined otherwise.
This gives a function $s$ from a detachable subset of $\mathbb{N}$
onto that part of $O_r$ at $J$'s level and beyond, so the latter
is countable by definition. It is easy to alter that counting to
include their ancestors too.
\medskip
\end{proof}

Note that the conditions in Theorem \ref{modreal} are not equivalent to
$C_r$'s being countable:

\begin{theorem}
\label{Kripke}If $C_{r}$ is countable for all regular
real numbers $r$, then for each binary sequence $\alpha$, there
exists a binary sequence $\beta$ such that $\alpha_{m}=0$ for all
$m$ if and only if $\beta_{m}=1$ for some $m$.
\end{theorem}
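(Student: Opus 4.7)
The plan is to encode the binary sequence $\alpha$ as a regular real number $r=r(\alpha)$ such that containment of one specific node of $T$ in $C_r$ is equivalent to the statement ``$\alpha_m=0$ for all $m$''. The hypothesis will then provide a surjection $\mathbb{N}\to C_r$, from which $\beta$ is read off by setting $\beta_n=1$ exactly when the $n$-th enumerated node is that distinguished one.

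To build $r$, I would define
\[
q_n=\begin{cases} 2^{-k} & \text{if } k\le n \text{ is the least index with } \alpha_k=1,\\ 0 & \text{if } \alpha_k=0 \text{ for all } k\le n,\end{cases}
\]
a decidable finite search for each $n$. A short computation should give $|q_m-q_n|\le 2^{-\min(m,n)}$, so $\mu_i=i$ is a modulus of convergence and the limit $r$ is a regular real number by Theorem \ref{modulus}. Since each $q_n\ge 0$, the limit satisfies $r\ge 0$; moreover, if some $\alpha_{k^{*}}=1$ with $k^{*}$ the least such index, then $q_n=2^{-k^{*}}$ for all $n\ge k^{*}$, forcing $r=2^{-k^{*}}>0$. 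Contrapositively, $r=0$ implies $\alpha_m=0$ for all $m$; the converse is immediate.

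For the distinguished node I would take $I^{*}=(-1,0)$, the level-$1$ node corresponding to $k=-2$, whose closure is $[-1,0]$. Because $r\ge 0$, the containment $r\in\overline{I^{*}}$ reduces to $r=0$, hence to $\alpha_m=0$ for all $m$. Applying the hypothesis to the regular real $r$, fix a surjection $f\colon\mathbb{N}\to C_r$ and set $\beta_n=1$ if $f(n)=I^{*}$ and $\beta_n=0$ otherwise. Then $\exists n\,(\beta_n=1)$ is equivalent to $I^{*}$ lying in the range of $f$, hence to $I^{*}\in C_r$, hence to $\alpha_m=0$ for all $m$, as required. The only point to monitor is constructivity: each $q_n$ is computed by a bounded search, and the implication $r=0\Rightarrow\alpha\equiv 0$ rests only on the strict positivity $r=2^{-k^{*}}>0$ that follows once a single $\alpha_{k^{*}}=1$ is exhibited, so no appeal to excluded middle is needed.
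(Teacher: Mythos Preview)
Your proposal is correct and follows essentially the same route as the paper: encode $\alpha$ as a nonnegative regular real $r$ with $r=0$ iff $\alpha\equiv 0$, take the distinguished node $(-1,0)$, and read $\beta$ off an enumeration of $C_r$. The only difference is cosmetic---the paper uses $r=\sum_m \alpha_m/2^m$ directly rather than your truncated sequence $q_n$---but the logical structure, the choice of node, and the verification that $(-1,0)\in C_r$ iff $r=0$ are identical.
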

\begin{proof}
Let $\alpha$ be a binary sequence and set
$r=\sum\alpha_{m}/2^{m}$. Let $C_{r}=\left\{
c_{1},c_{2},c_{3},\ldots\right\}  $. Define $\beta_{m}=1$ if
$c_{m}=(-1,0)$, and $\beta_{m}=0$ otherwise. Note $a_{m}=0$ for
all $m$ if and only if $r=0$. If $r=0$, then $(-1,0)\in C_{r}$ so
$\beta_{m}=1$ for some $m$. Conversely, if $(-1,0)\in C_{r}$, then
$r\leq0$, hence $r=0$. \medskip
\end{proof}

The conclusion of Theorem \ref{Kripke} is a form of the weak
Kripke schema \cite[p. 241]{TvD}. This conclusion, together with
MP (Markov's Principle), implies LPO (the limited principle of
omniscience): any binary sequence $\alpha$ either contains a one
or is all zeros. Indeed, because the sequence $\alpha+\beta$
cannot be all zeros, by MP it must contain a nonzero element
$\alpha_m + \beta_m$; if $\alpha_m = 1$ than $\alpha$ contains a
1, and if $\beta_m = 1$ then $\alpha$ is all 0s. Since MP holds
in the recursive interpretation of constructive mathematics, the
conclusion of Theorem \ref{Kripke} would imply the solvability of
the halting problem. Hence in the recursive interpretation the
conditions of Theorem \ref{modreal} are not equivalent to $C_r$'s being
countable. It would be nice to have a clean characterization of
those real numbers $r$ for which $C_r$ is countable.

For arbitrary Cauchy real numbers the situation is a bit more complicated.
We say that a subset $S$ of $T$ is a \emph{Cauchy subset} if it is
closed downwards, contains nodes from arbitrarily high levels, and
for all $p$ there is a level $l$ such that
$\vert j/2^s - k/2^t \vert < 2^{-p}$
for all nodes $(j/2^s, (j+2)/2^s)$ and
$(k/2^t, (k+2)/2^t)$ beyond $l$ in $S$.
The first clause in that definition says that $S$ is a
downset, the second that $S$ is unbounded. The last says that $S$
converges: given $p$ and $l$ as in the last clause, and $(j/2^s,
(j+2)/2^s)$ with $s>l$, then $(j+1)/2^s$ is within $2^{-p}+2^{-l}$ of
the limit of $S$. So a Cauchy subset is an unbounded, convergent
downset.

Examples of Cauchy subsets $S$ of $T$ are $O_r$ and $C_r$. More
generally, $S$ might also contain bounded branches or subsets that
peter out at a certain point.

It is not hard to see that $O_r \subseteq S$, for the real number $r$
to which $S$ converges. Hence $O_r$ is the
intersection of all the Cauchy subsets converging to $r$. As for
$C_r$, say that a subset $S$ of $T$ is \emph{unblocked} if every
node in $S$ has a child in $S$. Both $O_r$ and $C_r$ are
unblocked. We can characterize $C_r$ as the biggest unblocked
Cauchy subset that converges to $r$.
\begin{theorem}
Any unblocked Cauchy subset of $T$ that converges to $r$ is
contained in $C_r$. So $C_r$ is the union of all unblocked Cauchy
subsets that converge to $r$.
\end{theorem}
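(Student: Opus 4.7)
The plan is to fix an unblocked Cauchy subset $S$ converging to $r$ and an arbitrary node $I\in S$, then show $r\in\overline{I}$, which is precisely what membership of $I$ in $C_r$ asserts. The strategy is to produce descendants of $I$ in $S$ at arbitrarily high levels and then use the convergence of $S$ to conclude that $r$ is a limit of their midpoints, each of which lies in $\overline{I}$.

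The key step is an induction on $l\geq\text{level}(I)$ showing that $S$ contains some descendant of $I$ at level $l$. The base case is $I$ itself; in the inductive step, given a descendant $J\in S$ of $I$ at level $l$, the unblocked hypothesis provides a child $J'\in S$ of $J$, and $J'$ is a descendant of $I$ at level $l+1$. This argument uses only $\exists$-elimination, not choice, because at each stage one needs merely a new existence claim one level deeper, not a coherent descending chain. Now, given $\epsilon>0$, the Cauchy condition on $S$ supplies a level $l_0$ past which the midpoints of all nodes in $S$ lie within $\epsilon$ of $r$; applying the inductive claim at some $l\geq l_0$ yields a descendant $J\in S$ of $I$ at level $l$ whose midpoint lies in the open interval $I\subset\overline{I}$ and is within $\epsilon$ of $r$. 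Since $\epsilon$ was arbitrary, $r\in\overline{I}$. For the second sentence, $C_r$ is itself an unblocked (by property 1 in the definition of a c-ideal) Cauchy subset converging to $r$, so it is one of the sets in the union on the right, giving the reverse containment.

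The main obstacle is maintaining discipline about choice: one is tempted to build a descending chain through $I$ in $S$ by selecting a child at each step, which would require dependent choice. Recognising that the weaker level-by-level induction suffices---because the goal requires only the existence of a high-level descendant of $I$, not a coherent infinite path---is what keeps the argument within the intended choice-free setting.
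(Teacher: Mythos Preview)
Your proof is correct and follows the same line as the paper's: show that any $I\in S$ has descendants in $S$ at every higher level (using that $S$ is unblocked), observe that these descendants yield points of $I$ arbitrarily close to $r$, and conclude $r\in\overline{I}$. The paper is terser and does not spell out the induction or the avoidance of choice, nor does it explicitly address the second sentence; your added care on those points is welcome but does not change the underlying argument.
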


\begin{proof}
Let $S$ be an unblocked subset that converges to $r$, let $I \in
S$. We must show that $r\in \overline{I}$. As $S$ is unblocked,
$I$ has descendants -- subsets -- at every level beyond $I$'s and
these get arbitrarily close to $r$. Thus there are elements in $I$
that are arbitrarily close to $r$. As $\overline{I}$ is closed,
this means that $r\in \overline{I}$.
\end{proof}

As for the Cauchy real numbers themselves:

\begin{theorem} A real number $r$ is a Cauchy real number if and only if
$O_r$ is contained in a countable Cauchy subset of $T$. \end{theorem}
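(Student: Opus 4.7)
The plan is to prove the biconditional by explicit constructions in each direction, using a Cauchy sequence to build the countable Cauchy subset one way, and extracting one from the subset's enumeration the other way.

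For the forward implication, suppose $r$ is the limit of a Cauchy sequence $(q_n)$ of rationals. I would define
$$S \;=\; \{\, I \in T : q_n \in I \text{ for some } n \geq \mathrm{level}(I) \,\}$$
and argue that $S$ is a countable Cauchy subset of $T$ containing $O_r$. The design choice of tying $n$ to $\mathrm{level}(I)$ is the heart of the construction: it forces nodes that appear deep in $S$ to come only from late terms of $(q_n)$, so the Cauchy property of the sequence transfers directly to the convergence clause for $S$. Concretely, the enumeration of $S$ runs over pairs $(n, I)$ with $I$ at level at most $n$ and $q_n \in I$ (a decidable condition); the downset property is immediate since $I \subseteq J$ implies $\mathrm{level}(J) \leq \mathrm{level}(I)$; unboundedness holds because at each level $l$ some node contains $q_l$. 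For the convergence clause, given $p$, use the Cauchy property to find $L$ with $|q_m - q_{m'}| < 2^{-p-2}$ for $m, m' \geq L$; then for any two nodes $I_1, I_2 \in S$ beyond level $\max(L, p+2)$, each midpoint is within $2^{-p-2}$ of some $q_{n_i}$ with $n_i \geq L$, and the triangle inequality gives a gap of at most $3\cdot 2^{-p-2} < 2^{-p}$. Finally, $O_r \subseteq S$: any open dyadic node $J$ around $r$ contains $q_n$ eventually by $q_n \to r$, and we may always take $n$ also above $\mathrm{level}(J)$.

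For the reverse implication, suppose $S \supseteq O_r$ is a countable Cauchy subset enumerated as $s_1, s_2, \ldots$. For each $n$, unboundedness of $S$ guarantees some $s_i$ has $\mathrm{level}(s_i) \geq n$; let $k_n$ be the least such index and set $q_n$ to be the midpoint of $s_{k_n}$. The sequence $(q_n)$ is Cauchy directly from the convergence clause of $S$, since for all sufficiently large $n$ the node $s_{k_n}$ lies beyond any prescribed level. To show $q_n \to r$, I would exploit $O_r \subseteq S$: at every level $l$, some node of $O_r$ with midpoint within $2^{-l}$ of $r$ belongs to $S$, so the convergence clause for $S$ forces every deep enough node of $S$ to have midpoint close to $r$, and $s_{k_n}$ is one such for large $n$.

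The main obstacle, and the conceptual crux, is the forward construction. A naive choice such as ``all nodes ever containing some $q_n$'' is too large and fails to converge, because early, faraway terms of the sequence pollute arbitrarily deep levels of $S$. The constraint $n \geq \mathrm{level}(I)$ is a clean device that excises this pollution while still being lax enough to sweep in all of $O_r$. In the reverse direction, the subtle point is that the inclusion $O_r \subseteq S$ pins the ``limit of $S$'' down to be $r$ itself, rather than some other real consistent with the Cauchy data of $S$ alone; this is what guarantees $q_n \to r$ and not merely that $(q_n)$ is Cauchy.
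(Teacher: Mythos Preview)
Your proposal is correct and follows essentially the same approach as the paper. In the forward direction the paper picks a single node $J_n$ at level $n$ with $c_n\in\overline{J_n}$ and takes the downset it generates, whereas you take all nodes $I$ with $q_n\in I$ and $n\ge\mathrm{level}(I)$; both constructions implement the same idea of tying the sequence index to the level so that deep nodes come only from late terms. In the reverse direction your extraction of $q_n$ as the midpoint of the first enumerated node at level $\ge n$ is exactly the paper's ``midpoint of the first element of $S$ at level $n$,'' and your observation that $O_r\subseteq S$ forces the limit of $S$ to be $r$ is precisely what the paper uses (tersely) when it says ``$S$ converges to $r$.''
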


\begin{proof}
Suppose $r$ is a Cauchy real number, say the limit of the sequence of
rational numbers $c_n$. Let $J_n= (k/2^n, (k+2)/2^n)$ where $k$ is
the greatest integer such that $k/2^n\leq c_n$. Then $J_n$ is a
node at level $n$ in $T$, and the sequence $J_n$ converges to $r$.
Let $S$ be the downset generated by the terms in the sequence
$J_n$. Conversely, suppose $O_r$ is contained in a countable
Cauchy subset $S$. Then $S$ converges to $r$ and if we let $c_n$
be the midpoint of the first element of $S$ at level $n$, then
$c_n$ converges to $r$.
\medskip
\end{proof}

\section{Choice principles}

We have looked at three kinds of real numbers: Dedekind real
numbers, Cauchy real numbers, and regular real numbers. It is easy
to see that with Countable Choice we can show that these are the
same: we can build a Cauchy sequence from a Dedekind cut by
countably many choices of rationals, and we can build a modulus of
convergence for a Cauchy sequence, by making an appropriate
countable sequence of choices of integers. In fact, since the
choices made are either of a rational number or an integer, we
need only make countably many choices from a countable set, an
axiom variously called AC-NN, AC$_{00}$, and AC$_{\omega\omega}$.
In fact, we can get by on even less:

\begin{theorem}
\label{one}The following choice principles are equivalent:

\begin{enumerate}
\item AC$_{\omega 2}$: Given a sequence $S_{n}$ of nonempty subsets
of $\left\{0,1\right\}$, there exists a binary sequence $a_{n}$
such that $a_{n}\in S_{n}$.

\item AC$_{\omega b}$ for all $b$: For any positive integer $b$
and sequence $S_n$ of nonempty subsets of $\left\{
0,\ldots,b-1\right\}  $, there exists a sequence $a_{n}\in\left\{
0,\ldots,b-1\right\}  $ such that $a_{n}\in S_{n}$.

\item Given a sequence $S_{n}$ of nonempty subsets of $\mathbb{Z}$ of
uniformly bounded lengths (diameters), there exists a sequence
$a_n \in\mathbb{Z}$ such that $a_{n}\in S_{n}$.
\end{enumerate}
\end{theorem}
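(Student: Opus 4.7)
The plan is to close the cycle $(1)\Rightarrow(2)\Rightarrow(3)\Rightarrow(1)$. Two directions are immediate: $(3)\Rightarrow(2)$ holds since $\{0,\ldots,b-1\}\subseteq\mathbb{Z}$ is itself a subset of diameter $b-1$, and $(2)\Rightarrow(1)$ holds since (1) is the case $b=2$ of (2).

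For $(1)\Rightarrow(2)$ I would induct on $b$, with the base case $b=2$ being (1) itself. For $b\geq 3$, split $\{0,\ldots,b-1\}$ into $L=\{0,\ldots,\lceil b/2\rceil-1\}$ and $U=\{\lceil b/2\rceil,\ldots,b-1\}$, each of size at most $\lceil b/2\rceil<b$. Define $A_n\subseteq\{0,1\}$ by declaring $0\in A_n$ iff $S_n\cap L$ is nonempty and $1\in A_n$ iff $S_n\cap U$ is nonempty; $A_n$ is nonempty because $S_n$ is. Apply (1) to obtain $\epsilon_n\in A_n$, and let $S_n'$ be $S_n\cap L$ if $\epsilon_n=0$ and a translate of $S_n\cap U$ otherwise. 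Each $S_n'$ is a nonempty subset of $\{0,\ldots,\lceil b/2\rceil-1\}$, so the inductive hypothesis finishes the argument.

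For $(2)\Rightarrow(3)$, which is the substantive step, I would pass to residues modulo $d+1$. Given nonempty subsets $S_n\subseteq\mathbb{Z}$ with $\mathrm{diam}(S_n)\leq d$, consider the projection $\phi\colon\mathbb{Z}\to\mathbb{Z}/(d+1)\mathbb{Z}$. Any two elements of $S_n$ differ by at most $d<d+1$, so $\phi|_{S_n}$ is injective; hence $T_n:=\phi(S_n)$ is a nonempty subset of $\{0,\ldots,d\}$. Apply $\mathrm{AC}_{\omega,d+1}$ to obtain $r_n\in T_n$. Unpacking the definition of $T_n$ as an image, the statement $r_n\in T_n$ supplies an element $a_n\in S_n$ with $\phi(a_n)=r_n$, unique by the injectivity above.

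The step I anticipate requiring the most care is that last one: arguing that the choice of $r_n\in T_n$ really produces a specific preimage $a_n\in S_n$ and not merely an abstract existence claim. The resolution is to read ``$r_n\in T_n$'' constructively---as a witnessed existential whose underlying datum is $a_n$---and to appeal to the uniqueness of $a_n$ coming from injectivity of $\phi|_{S_n}$.
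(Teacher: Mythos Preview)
Your proof is correct and follows essentially the same route as the paper: the implication $(2)\Rightarrow(3)$ via reduction modulo $d+1$ and uniqueness of the preimage is exactly the paper's argument, and the trivial direction is the same. The only cosmetic difference is in $(1)\Rightarrow(2)$: you bisect $\{0,\ldots,b-1\}$ and apply $(1)$ before the inductive hypothesis, whereas the paper peels off one element via the retraction $\varphi:\{0,\ldots,b\}\to\{0,\ldots,b-1\}$ sending $b\mapsto b-1$, applies the inductive hypothesis to $\varphi(S_n)$ first, and then uses $(1)$ on the at-most-two-element fibers $\varphi^{-1}(t_n)\cap S_n$.
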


\begin{proof}
To go from 1 to 2, we induct on $b$. Certainly 2 holds for $b=1$.
If $b>1$, let $\varphi:\left\{  0,\ldots,b\right\}
\rightarrow\left\{  0,\ldots
,b-1\right\}  $ be the retraction that takes $b$ to $b-1$. Let $T_{n}%
=\varphi\left(  S_{n}\right)  $. Then we apply induction to get a
sequence $t_{n}\in T_{n}$, and apply 1 to get a sequence
$a_{n}\in\varphi^{-1}\left(t_{n}\right)  $.

The length of a subset $S$ of $\mathbb{Z}$ is bounded by $b$ if
the difference of any two elements of $S$ is at most $b$. To go
from 2 to 3, let $b$ be a bound on the lengths of the $S_{n}$, and
look at the images of $S_{n}$ modulo $b+1$ considered as subsets
of $\left\{  0,\ldots,b\right\}  $. So we get a sequence
$a_{n}\in\left\{  0,\ldots,b\right\}  $ so that each $S_{n}$
contains an element congruent to $a_{n}$ modulo $b+1$. But that
element of $S_{n}$ is unique.

Of course 3 implies 1. \medskip
\end{proof}

Clearly AC$_{\omega \omega}$ implies the properties above. To
refine the matter even more, let AC$_{\omega, <\omega}$ be the
statement that there is a choice function for the sequence $S_n$,
where each $S_n$ is a bounded set of natural numbers, while
perhaps not uniformly so. Then AC$_{\omega \omega}$ implies
AC$_{\omega, <\omega}$, which in turn implies AC$_{\omega 2}$. The
reason we are looking at this is:

\begin{corollary}
AC$_{\omega 2}$ implies that every real number is regular.
\end{corollary}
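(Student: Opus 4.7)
The plan is to show that, for each Dedekind real $r$ given by a located cut $L$, $C_r$ contains an infinite path; by Theorem \ref{modreal} this is equivalent to $r$ being a regular real number. The role of $\mathrm{AC}_{\omega 2}$ is to extract, from the propositional locatedness of $L$, explicit binary witnesses that can then drive a purely mechanical construction of the path.

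First I would prepare the choice instance, uniformly over all of $T$. For each node $I=((k-1)/2^{n},(k+1)/2^{n})$ I set up two locatedness queries of $L$: one on the pair $\bigl((2k-1)/2^{n+1},\,k/2^{n}\bigr)$ and one on the pair $\bigl(k/2^{n},\,(2k+1)/2^{n+1}\bigr)$. Each query gives a nonempty subset of $\{0,1\}$ (the value $0$ witnessing that the left rational lies in $L$, the value $1$ that the right rational lies outside $L$). The family of these subsets is indexed by $T\times\{1,2\}$, which is countable, so $\mathrm{AC}_{\omega 2}$ supplies witness functions $\ell_{1},\ell_{2}\colon T\to\{0,1\}$.

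Then I would build the path $I_{0},I_{1},\ldots$ in $C_{r}$ by a straightforward recursion. Using rational bounds $u<r$ and $r\le v$ obtained from the nonemptiness and properness of $L$, one finds a starting node $I_{0}\in O_{r}$. Given $I_{n}\in C_{r}$, determine $I_{n+1}$ from the pair $(\ell_{1}(I_{n}),\ell_{2}(I_{n}))$ by the rule $(0,0)\mapsto\rho I_{n}$, $(0,1)\mapsto\mu I_{n}$, $(1,1)\mapsto\lambda I_{n}$; the pair $(1,0)$ would assert both $r\le k/2^{n}$ and $k/2^{n}<r$, so it never arises. A short case analysis, combining the two witnessed inequalities with $r\in\overline{I_{n}}$, shows in each admissible case that $r\in\overline{I_{n+1}}$, hence $I_{n+1}\in C_{r}$.

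The main obstacle is structural rather than computational: a single locatedness query cannot separate three children, but two queries placed at the cut points $(2k-1)/2^{n+1}$, $k/2^{n}$, $(2k+1)/2^{n+1}$ decompose $\overline{I}$ into subregions that identify the closures $\overline{\lambda I}$, $\overline{\mu I}$, $\overline{\rho I}$ unambiguously once the impossible case is excluded. Crucially, the whole choice problem is posed ahead of the recursion, so $\mathrm{AC}_{\omega 2}$ (a single global choice from nonempty subsets of $\{0,1\}$) suffices, rather than a recursion-dependent choice principle. Once the witnesses are in hand, the path is produced mechanically and Theorem \ref{modreal} concludes that $r$ is regular.
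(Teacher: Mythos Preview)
Your argument is correct, but it follows a genuinely different route from the paper's. The paper does not go through the pseudotree at all: it invokes the third equivalent form of AC$_{\omega 2}$ from Theorem~\ref{one} (choice from subsets of $\mathbb{Z}$ of uniformly bounded length), applying it to the sets $S_n=\{m\in\mathbb{Z}:\left|r-m/n\right|\le 1/n\}$, which are inhabited and have length at most $2$; the resulting sequence $a_n=m_n/n$ is then a regular sequence for $r$ outright. Your proof instead uses AC$_{\omega 2}$ in its raw binary form, posing two locatedness queries at every node of $T$ in advance, and then drives a deterministic descent through $C_r$, finishing with Theorem~\ref{modreal}. The paper's version is shorter and needs neither $T$ nor Theorem~\ref{modreal}; your version has the virtue of being thematic---it makes explicit that AC$_{\omega 2}$ is exactly what is needed to turn propositional locatedness into a signed-bit expansion (a canonical number-generator in Heyting's sense). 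One small wording fix: your starting node should be taken in $C_r$, not $O_r$; from nonemptiness and properness of $L$ you get $u<r$ and $r\le v$, which immediately places $r$ in a suitable closed dyadic interval, and that is all the recursion requires. (If you prefer $O_r$, note $r\le v$ gives $r<v+1$, so a strict upper bound is available too.)
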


\begin{proof}
Let $r$ be a real number. We will construct a sequence $a_n$ of
rational numbers such that $\left\vert r-a_{n}\right\vert \leq
1/n$. To this end, let $S_n=\left\{ m\in\mathbb{Z}:\left\vert
r-m/n\right\vert \leq 1 /n\right\}$. Then $S_n$ is nonempty: since
$r$ is real, there is a rational $q$ within $1/2n$ of $r$, meaning
that $r$ is in the open interval $(q - 1/2n, q + 1/2n)$; the
closed interval $[q - 1/2n, q + 1/2n]$ contains either one or two
fractions of the form $m/n$; and the numerator of any such
fraction will be in $S_n$. Also, $S_n$ is of length at most $2$:
suppose $\vert r-j/n \vert, \vert r-k/n \vert \leq 1/n$, with
$j<k$. From the first inequality, $r \in [(j-1)/n, (j+1)/n]$, and
from the second $r \in [(k-1)/n, (k+1)/n]$. Hence those intervals
must overlap, and so $k-1 \leq j+1$, or $k-j \leq 2$.

Applying (the third version of) AC$_{\omega 2}$, we get a sequence
$m_n$; $a_n = m_n/n$ is as desired.
\medskip
\end{proof}

Presumably we could get by with something less than AC$_{\omega
2}$, since it seems unlikely that AC$_{\omega 2}$ would follow
from every Dedekind real number's being a Cauchy real number,
every Cauchy real number's being a regular real number, or
anything similar. On the other hand, some kind of choice is
necessary, as those equivalences are not theorems in IZF (see
\cite{L}). So exactly what choice principles are those statements
about the real numbers equivalent to? Well, they themselves could
be taken as choice principles. Moreover, it might well be that
among all equivalent formulations, those are the simplest, and so
are the best formulations of some weak choice principles. Still,
it might be useful to have different formulations, and the
versions in terms of the pseudotree $T$ follow immediately from
the work of the previous section.

\begin{corollary} Every real number is a Cauchy real number if and only
if every o-ideal of $T$ is contained in a countable Cauchy subset
of $T$.
\end{corollary}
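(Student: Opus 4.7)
The plan is to observe that this corollary follows immediately by combining two results already established: the bijection $r \mapsto O_r$ between real numbers and o-ideals of $T$ (Theorem on o-ideals), and the characterization that a real number $r$ is Cauchy if and only if $O_r$ is contained in a countable Cauchy subset of $T$ (the preceding theorem).

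For the forward direction, I would assume every real number is Cauchy. Given an arbitrary o-ideal $O$, use the bijection to write $O = O_r$ for some real $r$. Since $r$ is Cauchy by hypothesis, the preceding theorem hands us a countable Cauchy subset of $T$ containing $O_r = O$, as required.

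For the reverse direction, assume every o-ideal sits inside a countable Cauchy subset. Given any real number $r$, the set $O_r$ is an o-ideal, so by hypothesis it is contained in some countable Cauchy subset of $T$; the preceding theorem then tells us $r$ is a Cauchy real number.

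There is essentially no obstacle: the content of the corollary is already packaged into the two prior theorems, and the corollary just strings them together using the bijection. The only thing to be careful about is that the bijection is used constructively in both directions — that every real gives an o-ideal and, more importantly, that every o-ideal arises from a real — but both directions are provided by the earlier bijection theorem without any appeal to choice.
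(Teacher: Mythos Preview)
Your proposal is correct and matches the paper's approach: the paper gives no separate proof for this corollary, merely remarking that it ``follow[s] immediately from the work of the previous section,'' and the two ingredients you identify---the bijection $r\mapsto O_r$ between real numbers and o-ideals, and the theorem characterizing Cauchy real numbers as those $r$ for which $O_r$ is contained in a countable Cauchy subset---are exactly what that remark points to.
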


\begin{corollary} Every Cauchy real number is regular if and only if every
countable Cauchy subset of $T$ contains an infinite path.
\end{corollary}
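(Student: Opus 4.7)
The plan is to read this corollary as a direct consequence of Theorem \ref{modreal} (regularity is equivalent to $O_r$ or $C_r$ containing an infinite path) together with the immediately preceding theorem (a real $r$ is Cauchy precisely when $O_r$ is contained in a countable Cauchy subset of $T$). Together these two results reduce the corollary almost to bookkeeping, leaving only one genuine identification: that an infinite path inside a Cauchy subset converging to $r$ automatically lies in $C_r$.

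For the forward implication, I would take a countable Cauchy subset $S$, let $r$ be its limit, and invoke the observation made just before the preceding theorem that $O_r\subseteq S$. The preceding theorem then certifies $r$ as a Cauchy real, the hypothesis promotes it to a regular real, and Theorem \ref{modreal} supplies an infinite path in $O_r$, which lies inside $S$. This direction really is just a chain of citations.

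For the reverse implication, I would start with any Cauchy real $r$ and use the preceding theorem to wrap $O_r$ in a countable Cauchy subset $S$. The hypothesis then produces an infinite path $I_0\supset I_1\supset\cdots$ in $S$; the radii halve at each step, so this nested sequence of closed intervals shrinks to a unique point, and the convergence clause in the definition of a Cauchy subset forces that point to be the limit $r$ of $S$. Consequently $r\in\overline{I_n}$ for every $n$, so the path lies in $C_r$, and Theorem \ref{modreal} returns that $r$ is regular. The only place where anything beyond a citation is used is precisely this last step, where the Cauchy convergence condition on $S$ is needed to identify the path's limit with $r$; that is where I expect the mild obstacle to sit, though it is short.
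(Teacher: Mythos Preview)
Your proposal is correct and is exactly the elaboration the paper intends: the paper gives no explicit proof, stating only that the corollary ``follows immediately from the work of the previous section,'' and your argument is precisely the expected chaining of Theorem~\ref{modreal}, the theorem immediately preceding the corollary, and the observation $O_r\subseteq S$ for any Cauchy subset $S$ converging to $r$. The one genuine step you flag---that an infinite path inside $S$ must converge to $r$ and hence lie in $C_r$---is indeed the only thing requiring a word of justification, and your handling of it is fine (you could also cite the theorem that every unblocked Cauchy subset converging to $r$ sits inside $C_r$, applied to the downset generated by the path).
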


%------------------------------------------------------------------------
\section{Riesz spaces}

By a \emph{Riesz space} we mean a lattice-ordered vector space $V$
over the rational numbers. We assume that $V$ has a \emph{unit}: a
distinguished element $1$ such that if $x\in V$, then $x\leq n1$
for some natural number $n$. If $V$ is nontrivial, then $q \mapsto
q1$ gives an embedding of the rational numbers into $V$. We will
identify a rational number $q$ with its image $q1$ in $V$ and
write $x < q$ to mean that $x\leq q'$ for some rational number $q'
< q$.

For $x \in V$, let $x^+ = x \vee 0$ and $x^- = -x \vee 0$.
It follows that $x = x^+ - x^-$. Also, let $|x| = x^+ + x^- \geq 0$.
We say that an element $x\in V$ is an \emph{infinitesimal} if
$|x|\leq q1$ for
every positive rational number $q$, and that $V$ is
\emph{archimedean} if its only infinitesimal element is zero. Note
that $\mathbb{R}$ is an archimedean Riesz space.

Although the field of scalars for a Riesz space is usually
taken to be $\mathbb{R}$ rather than $\mathbb{Q}$, the latter
choice results in a more general structure for the purpose
of constructing homomorphisms into $\mathbb{R}$, our
ultimate interest. That's because
any Riesz $\mathbb{Q}$-homomorphism from a Riesz space
over $\mathbb{R}$ into $\mathbb{R}$ is also an
$\mathbb{R}$-homomorphism.

\begin{theorem}
Let $V$ and $W$ be Riesz spaces over $\mathbb{R}$. If $W$ is archimedean,
then any Riesz homomorphism from $V$ to $W$ over $\mathbb{Q}$ is a
homomorphism over $\mathbb{R}$.
\end{theorem}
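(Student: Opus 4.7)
The idea is to show that $f(rx)-rf(x)$ is infinitesimal in $W$ and then invoke the archimedean hypothesis. First I would reduce to the case $x\geq 0$: since $V$ is a Riesz space we may write $x=x^{+}-x^{-}$ with $x^{\pm}\geq 0$, and because $\mathbb{R}$-scalar multiplication on $V$ and $W$ distributes over subtraction and $f$ is additive (being $\mathbb{Q}$-linear), we have $f(rx)-rf(x)=\bigl(f(rx^{+})-rf(x^{+})\bigr)-\bigl(f(rx^{-})-rf(x^{-})\bigr)$. So it suffices to prove $f(ry)=rf(y)$ when $y\geq 0$.

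For such $y$, and any rationals $q\leq r\leq q'$, I would sandwich both $f(ry)$ and $rf(y)$ between $qf(y)$ and $q'f(y)$. On the one hand $qy\leq ry\leq q'y$ in $V$, and $f$ — being order-preserving because it preserves $\vee$ — together with its $\mathbb{Q}$-linearity yields $qf(y)\leq f(ry)\leq q'f(y)$. On the other hand $f(y)\geq 0$ in $W$ (same reason, plus $f(0)=0$), so $qf(y)\leq rf(y)\leq q'f(y)$ holds directly in the $\mathbb{R}$-Riesz space $W$. Combining these inequalities gives $|f(ry)-rf(y)|\leq (q'-q)f(y)$.

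To finish I would bound $f(y)$ by a multiple of the unit: since $y\leq n\cdot 1_{V}$ and $f(1_{V})\leq m\cdot 1_{W}$ for some positive integers $n,m$, monotonicity of $f$ yields $f(y)\leq nm\cdot 1_{W}$. Given any positive rational $\varepsilon$, the locatedness of the Dedekind cut of $r$ (by bisection, starting from any rational interval known to contain $r$) supplies rationals $q<r<q'$ with $q'-q<\varepsilon/(nm)$, whence $|f(ry)-rf(y)|\leq \varepsilon\cdot 1_{W}$. Since $\varepsilon$ was an arbitrary positive rational, $f(ry)-rf(y)$ is infinitesimal, and the archimedean hypothesis on $W$ forces it to be zero.

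The only step that requires any care is the extraction of the enclosing rationals $q<r<q'$ with $q'-q$ arbitrarily small; this is a routine consequence of the locatedness axiom and needs no choice principle. Everything else is standard Riesz-space bookkeeping, so I do not anticipate any substantive obstacle.
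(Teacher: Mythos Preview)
Your proposal is correct and follows essentially the same route as the paper: reduce to $x\geq 0$, sandwich both $f(rx)$ and $rf(x)$ between $qf(x)$ and $q'f(x)$ for rationals $q\leq r\leq q'$, and invoke the archimedean hypothesis. You are in fact slightly more explicit than the paper, which passes directly from $(q'-q)f(x)$ to ``infinitesimal'' without writing down the unit bound on $f(x)$; note that since Riesz homomorphisms in this paper preserve the unit, you may take $m=1$ in your estimate.
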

\begin{proof}
Let $f:V\rightarrow W$ be a Riesz homomorphism over $\mathbb{Q}$.
For $x\in V$ and $r\in \mathbb{R}$ we must show that $f(rx)=rf(x)$.
As $x$ is the difference of two positive elements of $V$, we may
assume that $x\geq 0$, so $f(x)\geq 0$. Let $p$ and $q$ be
arbitrary rational numbers such that $p\leq r\leq q$.
Then $px\leq rx\leq qx$ so $pf(x)\leq f(rx)\leq qf(x)$ and
$pf(x)\leq rf(x)\leq qf(x)$. It follows that
$$
    (p-q)f(x) \leq f(rx) - rf(x) \leq (q-p)f(x)
$$
Because $|q-p|$ can be arbitrarily small, and $W$ is archimedean,
this implies that $f(rx) = rf(x)$.
\end{proof}

We cannot eliminate the condition that $W$ be archimedean from
this theorem because of the following classical counterexample.
Let $V = \mathbb{R}\times \mathbb{R}$ with the lexicographic
order. Note that we cannot find a constructive proof of the
existence of the join of two elements in $V$. Let
$g:\mathbb{R}\rightarrow \mathbb{R}$ be a linear transformation
over $\mathbb{Q}$ and define $f:V\rightarrow V$ by $f(x,y) =
(x,g(x)+y)$. It is easy to see that $f$ is a Riesz homomorphism
over $\mathbb{Q}$, and that $f$ is a homomorphism over
$\mathbb{R}$ if and only if $g$ is a linear transformation over
$\mathbb{R}$.

The canonical example of an archimedean Riesz space
is a space $E$ of bounded real-valued functions on a set $X$ that
contains the constant function $1$. Evaluation at
a point of $X$ is a Riesz homomorphism from $E$ into $\mathbb{R}$.
The set of homomorphisms from a Riesz space to $\mathbb{R}$ has a
natural topology and is often called the {\it spectrum} of the
Riesz space \cite{CS, Fr}.

Conversely, any archimedean Riesz space $V$ can be embedded as a
subspace of the space of real-valued continuous functions on its
spectrum (the Stone-Yosida representation theorem). The embedding
of $V$ takes $a \in V$ to the function $\hat a(\sigma) =
\sigma(a)$. This is why we are interested in homomorphisms of $V$
into $\mathbb{R}$. The standard proofs of the Stone-Yosida theorem
are not constructive as they rely heavily on both the law of
excluded middle and the axiom of choice.

Following \cite{CS}, let
   $U(a) = \{ q \in \mathbb{Q} \mid a < q\}$.
The set $U(a)$ is an upper cut in the rational numbers, but need
not be located, so might not correspond to a real number. Still,
$U(a)$ has many of the  characteristics of a real number (and so
is sometimes called an {\em upper real number}, for instance in
\cite{CS}). For instance, for $p$ rational, we will have need of
the predicates $p \leq U(a)$, which means $p\leq q$ for all $q\in
U(a)$, and $p < U(a)$, which means that $p < q \leq U(a)$ for some
rational number $q$.

If $U(a)$ is located, then it is the upper cut of a (Dedekind)
real number $\sup(a)$. If $U(a)$ is located for every $a \in V$, then
$\sup(|\cdot|)$ is a seminorm on $V$. This will be a norm exactly when
$V$ is archimedean.

If $I$ is the interval $(p,q)$, then we let the string of symbols
``$a \in I$" denote the Riesz space element $(a-p) \wedge (q-a)$. We
will be working with the predicate Pos($a$) = ``$0 < U(a)$", even if
$U(a)$ is not located. Note that if $V$ is a
function space, with 1 the constant function with value 1, then
classically Pos($a \in I$) exactly when $a$ takes on a value in $I$.

We denote the set of functions from $A$ to $B$ by $^AB$. If $B$ is a
partially ordered set, and $f_i \in$ $^{A_i}B$, then we set
$f_1 \leq f_2$ if $A_1 \subseteq A_2$ and $f_1(a) \leq f_2(a)$ for all
$a\in A_1$.

\begin{definition} \label{s-br}
Let $X$ be a set and $\chi$ a set of functions from finite subsets
of $X$ to $T$.
\begin{enumerate}
\item We say that $\chi$ is {\bf well-formed}, and that $X$ is the
domain of $\chi$, if

\begin{itemize}
\item $X = \bigcup_{{\bf I} \in \chi} dom({\bf I})$, and
\item $\chi$ is closed downwards.
\end{itemize}

\item A well-formed $\chi$ is {\bf extendible} if, for all {\bf I}
$\in \chi$, $u \in X$, and $n \in \mathbb{N}$, there is a {\bf J}
$\in \chi$ extending {\bf I} with $u \in$ dom({\bf J}) and
level($J_u$) $\geq n$.

\item Let $X$ be a subset of a Riesz space $V$. The {\bf signed-bit
representation} of $X$, with notation $X_T$, is the subset of
$\; \; \bigcup_Y \; ^YT$, as $Y$ ranges over all finite subsets of
$X$, such that {\bf I} = $(I_y)_{y \in Y} \in X_T$ iff
Pos$(\bigwedge_{y \in Y} y \in I_y)$.
\end{enumerate}
\end{definition}

It is immediate that the signed-bit representation $X_T$ is
well-formed, with domain $X$. The essence of the Coquand-Spitters
construction is that, if $U(a)$ is located for all $a \in V$, then
$V_T$ is also extendible. The way they use this is to build Riesz
homomorphisms of a separable Riesz space $V$ into $\mathbb{R}$
(there called {\it representations}), as follows. They take
$X$ to be a countable dense subset of $V$ and let {\bf I} be any
starting point in $X_T$. Using DC, they then extend {\bf I} to all
levels and to include all of $X$, yielding a homomorphism of $X$,
which, by density, can be extended uniquely to all of $V$.

\begin{definition} An o-ideal through $\chi$ is an assignment of an
o-ideal $r_x$ through $T$ to each $x$ in the domain $X$ of $\chi$
such that, for all {\bf I} = $(I_y)_{y \in Y} \in \Pi_y r_y$, {\bf
I} $\in \chi$.
\end{definition}

\begin{theorem} There is a canonical bijection between Riesz
homomorphisms of $V$ into $\mathbb{R}$ and o-ideals through $V_T$.
\end{theorem}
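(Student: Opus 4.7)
My plan is to use the earlier bijection $r \leftrightarrow O_r$ between real numbers and o-ideals of $T$. For a Riesz homomorphism $f \colon V \to \mathbb{R}$, define $\Phi(f) := (O_{f(x)})_{x \in V}$; for an o-ideal $(r_x)$ through $V_T$, define $\Psi((r_x))(x)$ to be the unique real number whose o-ideal is $r_x$. The work splits into three tasks: (i) $\Phi(f)$ really is an o-ideal through $V_T$; (ii) $s := \Psi((r_x))$ really is a Riesz homomorphism; (iii) $\Phi$ and $\Psi$ are mutually inverse.

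Task (i) is short. Each $O_{f(x)}$ is an o-ideal by the earlier theorem. For $(I_y)_{y\in Y} \in \prod_y O_{f(y)}$ with $I_y = (p_y,q_y)$, $f(y) \in I_y$ for each $y$, so evaluating $f$ on $\alpha := \bigwedge_{y\in Y} (y - p_y) \wedge (q_y - y)$ yields a strictly positive real. To transfer this to Pos$(\alpha)$ in $V$ I use that $f$ fixes rationals and preserves $\leq$: any rational $q$ with $q > \alpha$ in $V$ (i.e., $\alpha \leq q' < q$ for some rational $q'$) also satisfies $q > f(\alpha)$ in $\mathbb{R}$, so any positive rational strictly below $f(\alpha)$ is a positive rational lower bound of $U(\alpha)$, giving Pos$(\alpha)$ and hence $(I_y) \in V_T$.

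Task (ii) is the heart. For each Riesz operation $\ast \in \{+,\vee,\wedge\}$ and for rational scalar multiplication I will prove $s(x_1 \ast x_2) = s(x_1) \ast s(x_2)$ via a uniform template, illustrated here for $+$. Set $I_1 + I_2 := (p_1+p_2,\, q_1+q_2)$. I claim that for every $I_1 \in r_{x_1}, I_2 \in r_{x_2}, J \in r_{x_1+x_2}$, the intervals $I_1 + I_2$ and $J$ meet in $\mathbb{Q}$. Suppose not; by symmetry take the case $q_1 + q_2 \leq p_J$. The o-ideal-through hypothesis gives Pos$(\gamma)$ with $\gamma := (x_1\in I_1) \wedge (x_2\in I_2) \wedge (x_1+x_2 \in J)$. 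But $\gamma \leq q_1 - x_1$, $\gamma \leq q_2 - x_2$, and $\gamma \leq (x_1+x_2) - p_J$ sum to $3\gamma \leq q_1 + q_2 - p_J \leq 0$, and Pos of an element bounded above by a non-positive rational is impossible (its upper cut contains $\mathbb{Q}_{>0}$, which has no positive rational lower bound). Shrinking $I_1, I_2, J$ to diameter $< \varepsilon/3$ around $s(x_1), s(x_2), s(x_1+x_2)$ and using the guaranteed overlap then bounds $|s(x_1+x_2) - s(x_1) - s(x_2)| < \varepsilon$ for arbitrary $\varepsilon > 0$, forcing equality.

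The same template covers $\vee$ (target $(\max(p_1,p_2),\max(q_1,q_2))$, with bounds $\gamma + (x_1 \vee x_2) = (\gamma+x_1) \vee (\gamma+x_2) \leq \max(q_1,q_2)$ and $\gamma \leq (x_1\vee x_2) - p_J$ summing to $2\gamma \leq \max(q_1,q_2) - p_J \leq 0$), $\wedge$ (dually), and rational scalar multiplication (with target $qI$). The main obstacle I expect is the $\vee/\wedge$ algebra: these inequalities depend on the translation-invariance of the lattice operations in a vector lattice rather than on purely additive identities, and care is needed at each step to keep the chain of bounds summing to something non-positive without dividing by a quantity that could disrupt the Pos predicate. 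The unit case $s(1) = 1$ is immediate, since Pos of the rational $(1 - p_J)\wedge(q_J - 1)$ collapses to $p_J < 1 < q_J$, so $1 \in J$ for every $J \in r_1$. Task (iii) is then automatic: $\Psi \circ \Phi(f)(x)$ is the unique real in $O_{f(x)}$, namely $f(x)$, while $\Phi \circ \Psi((r_x)) = (O_{s(x)}) = (r_x)$ by construction.
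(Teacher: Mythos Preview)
Your proof is correct and follows essentially the same route as the paper: both directions use the bijection $r\leftrightarrow O_r$, the forward direction rests on the lemma ``$f(a)>0\Rightarrow\mathrm{Pos}(a)$'', and the backward direction is the overlapping-intervals argument, with addition spelled out and the other operations declared analogous. The only cosmetic difference is that you derive the contradiction by summing three bounds to get $3\gamma\le 0$, whereas the paper phrases the same step as $e\wedge f\wedge(-e-f)\le 0$; these are the same inequality.
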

\begin{proof} By results of the section 2, an o-ideal can
be considered to be a real number. So both homomorphisms of $V$
into $\mathbb{R}$ and o-ideals through $V_T$ are assignments of
real numbers to the members of $V$. The coherence conditions on a
Riesz homomorphism correspond to the positivity predicate in the
definition of the extendible set $V_T$.

The main technical lemma needed is that, if $f$ is such a
homomorphism, and $f(a) > 0$, then Pos($a$). So let $q$ be such
that $f(a) \geq q > 0.$ Suppose $r \in U(a)$. Then $r>a$, and $r =
f(r) \geq f(a) \geq q > 0$, as desired.

In some detail, let $f : V \rightarrow \mathbb{R}$ be a Riesz
homomorphism. The induced o-ideal is given by $x \mapsto
O_{f(x)}$. (Recall that $O_r$ is the o-ideal corresponding to
$r$.) We must show that this is through $V_T$, which means that if
$I_y \in O_{f(y)}$ for each $y$ in a finite set $Y$ then $(I_y)_{y
\in Y} \in V_T$. And that means Pos$(\bigwedge_{y \in Y} y \in
I_y)$. By the lemma, it suffices to show that $f(\bigwedge_{y \in
Y} y \in I_y) > 0$. Because $f$ is a homomorphism, the left-hand
side equals $\bigwedge_{y \in Y} f(y \in I_y)$. The infinimum of a
finite set of real numbers is positive if and only if each of
those reals is positive. So we need to show that $I \in O_{f(y)}$
implies $f(y \in I) > 0$. Recall that $I \in O_r$ iff $r \in I$
iff $\inf I < r < \sup I$. Also recall that $y \in I$ is an
abbreviation for $y - \inf I \wedge \sup I - y$. So what we need
to show is that $\inf I < f(y) < \sup I$ implies $f(y - \inf I
\wedge \sup I - y) > 0$. Again using that $f$ is a homomorphism,
the latter assertion reduces to $f(y) - \inf I > 0$ and $\sup I -
f(y) > 0$, which is exactly the hypothesis.

In the other direction, suppose that $x \mapsto O_{r_x}$ is an
o-ideal through $V_T$. Let $f(x) = r_x$. We must show that $f$ is
a Riesz homomorphism: $f(x+y) = f(x) + f(y), f(rx) = rf(x), f(1)
=1,$ and $f(x \wedge y) = f(x) \wedge f(y)$. We will prove the
first statement, and leave the others, all similar, to the reader.

Given $\epsilon > 0$, let $1/2^n < \epsilon/4$ and $I_x \in
O_{r_x}, I_y \in O_{r_y}$ have length $1/2^n$. Then the interval
$I_x + I_y$ has length less than $\epsilon/2$. We claim that any
$I \in O_{r_{x+y}}$ has to have a non-empty intersection with $I_x
+ I_y$. To this end, let $I \in O_{r_{x+y}}$. Because we're
dealing with intervals with rational endpoints, we can assume that
the intersection is empty and come up with a contradiction. For
the intersection to be empty, either $\inf I \geq \sup(I_x) +
\sup(I_y)$ or $\sup I \leq \inf(I_x) + \inf(I_y)$; we will
consider the former case only. Because the system $O_{r_x}$ is an
o-ideal through $V_T$, we have that the triple $(I_x, I_y, I)$ is
in $V_T$, i.e. Pos$(x \in I_x \wedge y \in I_y \wedge x+y \in I)$.
Unpacking that Riesz space element, we get Pos$(x - \inf(I_x)
\wedge \sup(I_x) - x \wedge y - \inf(I_y) \wedge \sup(I_y) - y
\wedge (x+y) - \inf I \wedge \sup I - (x+y))$. That latter Riesz
space element is less than or equal to $\sup(I_x) - x \wedge
\sup(I_y) - y \wedge (x+y) - \inf I$, which, by the case
hypothesis, is less than or equal to $\sup(I_x) - x \wedge
\sup(I_y) - y \wedge (x+y) - (\sup(I_x) + \sup(I_y))$. This last
element is of the form $e \wedge f \wedge (-e-f)$, which can be
shown by elementary Riesz space considerations to be $\leq 0$, in
other words not Pos$(e \wedge f \wedge (-e-f))$, which is the
desired contradiction.

Now pick an interval $I$ in $O_{r_{x+y}}$ of length less than
$\epsilon/2$. This $I$, which contains $f(x+y)$, overlaps $I_x +
I_y$, which contains $f(x) + f(y)$, so $f(x+y)$ is within
$\epsilon$ of $f(x) + f(y)$.
\end{proof}

So by converting a real number to a substructure of the tree-like
partial order $T$, homomorphisms of $V$ are converted to
substructures of products of $T$. Similar theorems hold for other
natural substructures of $T$.

\begin{definition} An o-ideal through $\chi$ is countable if each
$r_x$ is countable.
\end{definition}

\begin{theorem}
There is a canonical bijection between Riesz homomorphisms of $V$
into the regular real numbers and countable o-ideals through
$V_T$.
\end{theorem}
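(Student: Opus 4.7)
The plan is to reduce this to the previous theorem together with Theorem \ref{modreal}. The previous theorem already provides a canonical bijection between Riesz homomorphisms $f\colon V\to\mathbb{R}$ and o-ideals through $V_T$, where the bijection sends $f$ to the assignment $x\mapsto O_{f(x)}$. So the task is only to show that, under this bijection, the homomorphisms whose image lies in the regular reals correspond exactly to the countable o-ideals through $V_T$.

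First I would observe that the definition of ``countable o-ideal through $V_T$'' unfolds pointwise: it is an assignment $x\mapsto r_x$ (satisfying the coherence condition of Definition for ``o-ideal through $\chi$'') such that every individual $r_x$ is countable as a subset of $T$. Similarly, ``Riesz homomorphism of $V$ into the regular real numbers'' means a Riesz homomorphism $f\colon V\to\mathbb{R}$ such that $f(x)$ is a regular real number for every $x\in V$. Thus each side is obtained from the corresponding side of the previous theorem by a pointwise condition, and it suffices to match these two conditions element by element.

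Next I would invoke Theorem \ref{modreal}: for any real number $r$, the set $O_r$ is countable if and only if $r$ is regular. Applying this with $r=f(x)$ for each $x\in V$, the o-ideal $O_{f(x)}$ assigned to $x$ is countable precisely when $f(x)$ is a regular real number. Hence the restriction ``$f$ takes values in the regular reals'' on the homomorphism side corresponds exactly to the restriction ``each $r_x$ is countable'' on the o-ideal side, and the bijection of the previous theorem restricts to the bijection asserted here.

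The main (minor) obstacle is purely bookkeeping: one must check that the coherence/well-formedness condition on an o-ideal through $V_T$ is unaffected by the countability condition, so that the previous bijection really does restrict rather than requiring a fresh argument. But since the coherence condition concerns which tuples $(I_y)_{y\in Y}$ lie in $V_T$, and countability of each $r_x$ is a property of the individual components, there is no interaction between the two conditions, and the restriction is automatic.
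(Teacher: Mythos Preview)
Your proposal is correct and matches the paper's own approach: the paper simply says that the proof is the same as that of the preceding bijection theorem, with the additional observation that under this bijection regular reals correspond to countable o-ideals (which is exactly your appeal to Theorem~\ref{modreal}). Your explicit bookkeeping about the pointwise nature of the countability condition just spells out what the paper leaves implicit.
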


\begin{definition} An o-ideal through $\chi$ is countably
extendible if each $r_x$ is a subset of a countable Cauchy
subtree of T.
\end{definition}

\begin{theorem}
There is a canonical bijection between Riesz homomorphisms of R
into the Cauchy real numbers and countably extendible o-ideals through
$V_T$.
\end{theorem}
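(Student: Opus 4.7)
The plan is to deduce this theorem directly from the bijection already established between Riesz homomorphisms $V \to \mathbb{R}$ and o-ideals through $V_T$, together with the earlier characterization that $r$ is a Cauchy real number if and only if $O_r$ is contained in a countable Cauchy subset of $T$. The approach is simply to restrict the existing bijection to the appropriate subclasses on both sides and verify that those restrictions correspond.

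First I would recall the canonical bijection $f \leftrightarrow (x \mapsto O_{f(x)})$ from the theorem on homomorphisms into $\mathbb{R}$. That theorem already sends every Riesz homomorphism $V \to \mathbb{R}$ to an o-ideal through $V_T$ and vice versa, with all the Riesz-space axioms and the positivity coherence conditions already verified. Consequently, nothing about the homomorphism structure has to be reproved here; the only remaining task is to identify, inside the domain and codomain of that bijection, the subsets being claimed to correspond.

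Next I would apply the Cauchy characterization pointwise in $x$. For the forward direction, suppose $f$ is a Riesz homomorphism $V \to \mathbb{R}$ whose image consists of Cauchy real numbers. Then for each $x \in V$ the real $f(x)$ is Cauchy, so by the characterization theorem $O_{f(x)}$ is contained in a countable Cauchy subset of $T$; this is exactly the condition that $x \mapsto O_{f(x)}$ is countably extendible. Conversely, if $x \mapsto O_{r_x}$ is a countably extendible o-ideal through $V_T$, then each $O_{r_x}$ lies in a countable Cauchy subset of $T$, so by the same theorem each $r_x$ is a Cauchy real; hence the homomorphism $f(x) = r_x$ recovered from the bijection factors through the Cauchy reals.

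The main thing to notice is that this reduction really is this brief: both ``$f$ takes values in Cauchy reals'' and ``the o-ideal is countably extendible'' are defined pointwise in $x \in V$, so no witnesses need to be assembled uniformly across $V$ and no choice principle is hidden in the passage between the two formulations. Thus the principal (and essentially only) step is simply to invoke the previous theorem to obtain the underlying correspondence, then apply the Cauchy-real characterization one coordinate at a time. There is no genuine obstacle; the content of the statement is packaged entirely into the earlier results.
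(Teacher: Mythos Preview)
Your proposal is correct and matches the paper's approach exactly: the paper simply states that the proof is the same as that of the earlier bijection theorem, with the added observation that under this bijection Cauchy reals correspond to Cauchy reals. If anything, your write-up is more explicit than the paper's, spelling out the pointwise application of the Cauchy characterization and noting that no uniform choice is needed.
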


The proofs here are the same as in theorem 4.4, with the
additional observation that, when transforming Riesz homomorphisms
into o-ideals and vice versa, Cauchy reals are taken to Cauchy
reals and regular reals to regular reals.

Similar considerations apply to extending Riesz homomorphisms from
dense subsets. That is, suppose $X$ is a dense subset of a Riesz
space $V$. Then it makes no sense in general to talk about a Riesz
homomorphism of $X$, since $X$ might not even be a Riesz space.
However, $X_T$ contains the nearness information about $V$, so
that an o-ideal through $X_T$ induces a homomorphism of $V$. In
fact, these observations could be combined with those above, so
that $X$ need be taken only as a Riesz generating subset of a
dense set, for instance as the members of a dense set between 0
and 1. Then an o-ideal through $X_T$ is canonically extendible to
the generated Riesz space, which by density could be extended to
one through the whole Riesz space.

When extending homomorphisms this way, you no longer have a choice
of what kind of real numbers to use. That is, when dealing with
only Riesz-space structure (addition, scalar multiplication, and
sup), the corresponding operations on real numbers never take you
outside of any given class of real numbers: the sum of two
countable o-ideals is again countable, as is any multiple or sup
of such, and so on. However, the same no longer applies to limits
when dealing with density. A limit or accumulation point may not
have any countable sequence approaching it, so it should be clear
that attaching a Cauchy sequence, even if regular, to dense many
points in a neighborhood will not necessarily yield a Cauchy
sequence at the given point. Worse yet, even if we had that every
point in $V$ were the limit of a countable sequence from $X$,
there would still be problems going from Cauchy sequences on $X$
to ones on all of $V$: choosing a limiting sequence, choosing a
Cauchy sequence for each point in the sequence, etc. (For similar
issues in the simpler context of the real numbers alone, see
\cite{L}.) So the best we really can say is that any kind of
o-ideal on $X_T$ induces simply an o-ideal on $V_T$, i.e. a Riesz
homomorphism of $V$ into the Dedekind real numbers.

These considerations lead to the following
\begin{theorem}
If every extendible $\chi$ with $X$ of cardinality $\kappa$ has an
o-ideal, then every seminormed Riesz space with a dense subset of
cardinality $\kappa$ has a Riesz homomorphism into $\mathbb{R}$.
\end{theorem}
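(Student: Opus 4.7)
The plan is to apply the hypothesis to $\chi = X_T$, the signed-bit representation of a dense subset $X \subseteq V$ of cardinality $\kappa$. Because $V$ is seminormed, $U(a)$ is located for every $a \in V$; this is precisely what will make $X_T$ extendible. Once the hypothesis supplies an o-ideal through $X_T$, density of $X$ in $V$ is used to transfer it to an o-ideal through $V_T$, which by the earlier bijection between Riesz homomorphisms of $V$ and o-ideals through $V_T$ is exactly a Riesz homomorphism $V \to \R$.

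First I would verify extendibility of $X_T$. Given $\mathbf{I} = (I_y)_{y \in Y} \in X_T$, $u \in X$, and target level $n$, set $a = \bigwedge_{y \in Y} (y \in I_y)$, so $\mathrm{Pos}(a)$ holds. Because $V$ has a unit, $|u| \leq N$ for some integer $N$, so only the finitely many level-$n$ ternary intervals $I$ meeting $[-N, N]$ are relevant to $u$ and they cover its range. Locatedness of $U(a \wedge (u \in I))$ decides $\mathrm{Pos}(a \wedge (u \in I))$ for each such $I$; a Coquand-Spitters-style covering argument then guarantees at least one $I$ must succeed, since otherwise the non-positivity of all $a \wedge (u \in I)$ would combine via the Riesz distributive law to contradict $\mathrm{Pos}(a)$. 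Taking $J_u$ to be any successful $I$ produces the required $\mathbf{J}$.

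Applying the hypothesis gives an o-ideal through $X_T$, i.e.\ an assignment $x \mapsto r_x \in \R$ for $x \in X$ such that any finite compatible choice of nodes $I_y \in O_{r_y}$ satisfies $\mathrm{Pos}(\bigwedge_{y \in Y} (y \in I_y))$. I would then extend this to $V$ by picking, for each $v \in V$, a sequence $x_n \in X$ converging to $v$ in seminorm; the compatibility conditions force $(r_{x_n})$ to be Cauchy in $\R$ with limit independent of the sequence, so define $r_v$ to be this limit. Since $\mathrm{Pos}$ is stable under small seminorm perturbations, the compatibility condition transfers from $X$ to $V$, yielding an o-ideal through $V_T$, hence by the earlier correspondence a Riesz homomorphism $V \to \R$.

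The main obstacle will be the transfer step: one must check that the compatibility condition defining an o-ideal through $X_T$ --- a statement about nested families of positive Riesz elements --- carries enough quantitative content to force $x \mapsto r_x$ to be uniformly continuous in the seminorm, so that density of $X$ pushes the assignment from $X$ to $V$. Once that is in place, the remainder is routine completion-by-density combined with the already-proved correspondence between o-ideals through $V_T$ and Riesz homomorphisms of $V$ into $\R$.
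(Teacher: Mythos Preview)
Your proposal is correct and follows the same route the paper takes: the theorem is stated in the paper without a separate proof, as the summary of the preceding discussion, and your outline---form $X_T$, use the seminorm (locatedness of each $U(a)$) together with the Coquand--Spitters covering argument to see that $X_T$ is extendible, apply the hypothesis to get an o-ideal through $X_T$, then push it to $V_T$ by density and invoke the bijection with Riesz homomorphisms---is exactly that discussion made explicit.

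One phrasing to tighten: locatedness of $U(a\wedge(u\in I))$ does \emph{not} let you decide $\mathrm{Pos}(a\wedge(u\in I))$ for each $I$ individually (that would amount to deciding whether a real number is positive). The correct use of locatedness, which your next clause already gestures at, is quantitative: from $\mathrm{Pos}(a)$ you get a rational $\varepsilon>0$ with $\sup(a)\geq\varepsilon$; the covering identity in the Riesz space forces $\max_I \sup(a\wedge(u\in I))\geq\varepsilon$, and since each $\sup(a\wedge(u\in I))$ is a located real you can constructively pick one that exceeds $\varepsilon/2$. With that adjustment the extendibility step goes through, and the rest of your sketch (including your identification of the density transfer as the only nontrivial remaining point) matches the paper.
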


By cardinality here, we mean simply the Cantorian theory of
equinumerosity. So $\kappa$ is simply a set, and a set $X$
has cardinality $\kappa$ if it can be put into one-to-one
correspondence with $\kappa$. The latter principle has the flavor of a
restricted form of Martin's Axiom, hence the following definition.

\begin{definition} Martin's Axiom for o-ideals of cardinality $\kappa$,
written MA$_{\text{o-id}(\kappa)}$, is the assertion that every
extendible $\chi$ with $X$ of cardinality $\kappa$ has an o-ideal.
\end{definition}

One possible benefit of the reformulation of the existence of such
homomorphisms as MA$_{\text{o-id}(\kappa)}$ is that it can help
show that such homomorphisms do not exist. In \cite{CS}, Coquand
and Spitters show, under DC, that every separable, seminormed $V$
has a Riesz homomorphism into $\mathbb{R}$, essentially by showing
MA$_{\text{o-id}(\omega)}$. Of course, they don't refer to
signed-bit representations, and their definition of
\emph{countable} is broader than ``equinumerous with $\omega$'',
as is standard in constructive analysis (see \cite{B}). They then
ask whether DC is necessary. One way to approach that problem is
to find a model in which MA$_{\text{o-id}(\omega)}$ fails in such
a way that an equivalent Riesz space can be constructed from this
failure. In fact, this project was carried out. It was later
simplified \cite{LR'} to refer not to $T$ and its paths but more
simply to $\mathbb{R}$, which is better understood.

A limitation of the last theorem is that it is not a
biconditional. Indeed, we could not find any equivalence between
well-formed sets, possibly with extra conditions, on the one hand,
and any kind of Riesz spaces on the other. In the current
formulation, for instance, having Riesz homomorphisms into
$\mathbb{R}$ for every Riesz space might not be enough to get
o-ideals through all extendible $\chi$s, because $\chi$ might not
correspond to a Riesz space. Furthermore, there seems to be no
elegant formulation of a well-formed $\chi$ coming from a Riesz
space. One could consider instead all extendible $\chi$s, with
domain $X$, and extend $X$ to a Riesz space $V$ so that the
signed-bit representation of $X$ is exactly $\chi$. The problem
there is guaranteeing that $V$ is seminormed, with again
apparently no nice way of identifying those $\chi$s for which the
induced $V$ is seminormed. One could try to be more general, and
eliminate the restriction of $V$ being seminormed. There are
examples of function spaces that are not seminormed for which the
signed-bit representation is not extendible. You might then think
to eliminate the requirement of extendibility. But then there are
problems representing faithfully partial information about a Riesz
space in a well-formed set. In the end, it remains unclear what
an exact correspondence here would be. It would be interesting to
see such a theorem.

%-------------------------------------------------------------------------
\nocite{ex1,ex2}
\bibliographystyle{latex8}
\bibliography{latex8}
%\begin{thebibliography}
%\item {CS} Thierry Coquand and Bas Spitters,
%\item{L} On the Cauchy completeness of the constructive Cauchy reals
%\item {LR} in preparation
%\end{thebibliography}
\end{document}